\newcommand{\E}{\mathbb{E}}
\newcommand{\e}{\epsilon}
\newcommand{\df}{\mathrm{d}}
\newtheorem{theorem}{Theorem}
\newtheorem{claim}{Claim}
\newtheorem{lemma}{Lemma}
\newtheorem{conjecture}{Conjecture}
\begin{document}
\title{A Variant of The Corners Theorem}
\author{Matei Mandache\footnote{Mathematical Institute, University of Oxford. E-mail: \href{mailto:matei.mandache@maths.ox.ac.uk}{matei.mandache@maths.ox.ac.uk}
}}
\maketitle
\begin{abstract}
The Corners Theorem states that for any $\alpha > 0$ there exists an $N_0$ such that for any abelian group $G$ with $|G| = N \geq N_0$ and any subset $A \subset G \times G$ with $|A| \ge \alpha N^2$ we can find a corner in $A$ , i.e. there exist $x, y, d \in G$ with $d \neq 0$ such that $(x, y), (x+d, y), (x, y+d) \in A$.

Here, we consider a stronger version: given such a group $G$ and subset $A$, for each $d \in G$ we define $S_d = \{(x, y) \in G \times G : (x, y), (x+d, y), (x, y+d) \in A \}$ . So $|S_d|$ is the number of corners of size $d$. Is it true that, provided $N$ is sufficiently large, there must exist some $d \in G \setminus \{0\}$ such that $|S_d|> (\alpha^3 - \epsilon ) N^2$ ?

We answer this question in the negative. We do this by relating the problem to a much simpler-looking problem about random variables. Then, using this link, we show that there are sets $A$ with $|S_d| < C\alpha^{3.13} N^2$ for all $d \neq 0$, where $C$ is an absolute constant. We also show that in the special case where $G = \mathbb{F}_2^n$, one can always find a $d$ with $|S_d|> (\alpha^4 - \epsilon ) N^2$.
\end{abstract}
\section{Introduction}

In 1975, Szemer\'{e}di proved the following famous result about arithmetic progressions (see \cite{szemeredi75})

\begin{theorem}
Let $\e > 0$ and let $k$ be a positive integer. Then there exists a $N_0 = N_0(k, \e)$ such that whenever $N \ge N_0$ and $A \subset [N]$ has $|A| \ge \e N$, then $A$ contains a non-trivial arithmetic progression of length $k$
\end{theorem}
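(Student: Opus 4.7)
The plan is to prove Szemer\'edi's theorem by a density-increment strategy. After embedding $[N]$ into $\mathbb{Z}/N'\mathbb{Z}$ for a prime $N' \in (2N,4N]$, I may assume $A$ is a subset of a cyclic group $G$ of density $\alpha \ge \e/4$. The goal is then to show that if $A$ contains no nontrivial $k$-AP, then there is a long arithmetic progression $P \subset G$ on which $A$ has density at least $\alpha + c(\alpha)$; iterating this increment cannot continue past density $1$, which yields the bound $N_0(k,\e)$.

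Setting $f := 1_A - \alpha$, the first step is a counting lemma bounding the number of $k$-APs in $A$ by $\alpha^k |G|^2 + O(\|f\|_{U^{k-1}}) |G|^2$, proved by repeated Cauchy--Schwarz (telescoping $1_A = \alpha + f$ and applying the Gowers--Cauchy--Schwarz inequality coordinate by coordinate). Since $A$ has no nontrivial $k$-AP, the left-hand side is $O(|G|)$, forcing $\|f\|_{U^{k-1}}^{2^{k-1}} \gg_{k} \alpha^k$.

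The second step is to convert this non-uniformity into structure. I would invoke the inverse theorem for the Gowers $U^{k-1}$-norm (for $k=3$ this is just Fourier analysis \`a la Roth; for $k=4$ it is Gowers's quadratic Fourier analysis; for general $k$ it is the Green--Tao--Ziegler inverse theorem giving correlation with a $(k-2)$-step nilsequence of bounded complexity). A standard partition argument then decomposes $G$ into arithmetic progressions of length $\gg_{k,\alpha} |G|^{c}$ on each of which the nilsequence is essentially constant; pigeonholing over these progressions produces the required density increment on some sub-progression $P$.

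Finally, I would iterate, treating $P$ as the new ambient group via an affine identification with $[|P|]$; because density is bounded by $1$, the process terminates after $O_{k}(c(\alpha)^{-O(1)})$ steps, and tracking how $|P|$ shrinks at each step gives an effective (if tower-type) value of $N_0(k,\e)$. The main obstacle, by some distance, is the inverse theorem for $U^{k-1}$: the counting, pigeonholing and iteration are comparatively mechanical, but identifying the correct class of obstructions (polynomial phases or, more generally, nilsequences) and proving the inverse theorem is the deep step and is the reason the full theorem took a further three decades after Roth's $k=3$ case.
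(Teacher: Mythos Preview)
The paper does not actually prove this theorem: it is stated as background in the introduction and attributed to Szemer\'edi's 1975 paper, with no proof supplied. So there is nothing in the paper to compare your argument against directly.

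Your outline is a faithful sketch of the Gowers--Green--Tao--Ziegler density-increment approach, and as a high-level plan it is correct. It is, however, a genuinely different route from Szemer\'edi's original proof, which is purely combinatorial (iterated applications of the regularity lemma together with van der Waerden's theorem) and predates Gowers norms entirely. Your approach has the advantage of giving much better quantitative bounds once the inverse theorem is in hand, and it isolates the ``obstruction'' structure cleanly; Szemer\'edi's original argument avoids the deep inverse theorem altogether but at the cost of an extremely intricate double induction and bounds of Ackermann type. You are right that the inverse theorem for $U^{k-1}$ is the crux of your route; one minor point worth flagging is that the step ``partition $G$ into progressions on which the nilsequence is essentially constant'' is itself nontrivial for genuine nilsequences (as opposed to polynomial phases) and requires quantitative equidistribution results on nilmanifolds, but this is indeed part of the standard machinery you are invoking.
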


Here we are using $[N]$ to denote the set $\{1, 2, \dots, N\}$. There are several ways one could try and strengthen this result. In \cite{bergelsonhostkra05}, Bergelson, Host and Kra conjecture that, instead of finding just one arithmetic progression, one can find a large number of arithmetic progressions all with the same common difference. More specifically, they make the following conjecture.

\begin{conjecture}\label{BHK}
Let $0 < \alpha < 1$, let $\e > 0$ and let $k = 3$ or $4$. Then there exists an $N_0$ such that for all $N \ge N_0$ and any $A \subset [N]$ with $|A| \ge \alpha N$ there must exist a $d \neq 0$ such that the set $A$ contains at least $(\alpha^k - \e)N$ distinct arithmetic progressions of length $k$ and with common difference $d$.
\end{conjecture}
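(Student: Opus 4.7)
The plan is to handle the two cases $k=3$ and $k=4$ separately, since the former yields to classical Fourier analysis whereas the latter demands the higher-order Fourier machinery developed by Gowers and by Green--Tao--Ziegler. In both cases the overarching strategy is the same: decompose $1_A$ into a ``structured'' piece $g$ and a ``uniform'' piece $h$; show that $h$ contributes negligibly to the $k$-AP count
\[
T_d := \sum_x 1_A(x)1_A(x+d)\cdots 1_A(x+(k-1)d)
\]
for all but a vanishing fraction of $d$; and then exploit the structure of $g$ to locate a specific $d \neq 0$ where the count exceeds $(\alpha^k - \e)N$.

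For $k=3$ I would invoke an arithmetic regularity lemma to write $1_A = g + h$, where $g$ is approximately constant on cosets of a Bohr set $B$ of bounded rank and radius, and where $\|h\|_{U^2}$ is small. A standard application of the generalised von Neumann inequality then shows that $T_d = T_d(g) + o(N)$ for all but a negligible fraction of $d$. Restricting attention to $d \in B \setminus \{0\}$, on which the three-term progression averages of $g$ coincide up to $o(1)$ with its coset averages, a Jensen-type convexity inequality applied to the coset densities of $A$ produces some such $d$ with $T_d(g) \ge (\alpha^3 - \e/2)N$. This is essentially Green's proof of the popular difference theorem.

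For $k=4$ the same outline applies with $U^2$ replaced by $U^3$ and Bohr sets replaced by the nilmanifold factors supplied by the $U^3$-inverse theorem. The structured piece is now (approximately) a $2$-step nilsequence, and one must find a common difference $d$ for which the associated shifted quadruple stays close to the diagonal on a set of relative density at least $\alpha^4 - o(1)$. I expect this to be the main obstacle: for linear phases one can exploit additivity of the frequencies directly and reduce to an elementary averaging argument on cosets, whereas for genuine $2$-step nilsequences one must feed the quantitative equidistribution theory of Green and Tao on nilmanifolds into a convexity argument adapted to the two-step setting. It is precisely at this step that the argument is expected to break down for $k \ge 5$, where counterexamples constructed by Bergelson, Host and Kra demonstrate that no popular common difference need exist in general.
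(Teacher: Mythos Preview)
The paper does not contain its own proof of this statement: it is presented there as a conjecture of Bergelson, Host and Kra which has since been resolved, with the $k=3$ case attributed to Green via his arithmetic regularity lemma and the $k=4$ case to Green and Tao via a higher-order regularity lemma. There is therefore nothing in the paper to compare your argument against beyond those citations.

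Your outline is broadly faithful to those cited approaches. The $k=3$ sketch --- decompose $1_A$ via arithmetic regularity into a Bohr-almost-periodic part plus a $U^2$-small part, discard the latter by generalised von Neumann, then average over $d$ in the Bohr set and apply convexity --- is an accurate summary of Green's argument. For $k=4$ you correctly identify the replacement of $U^2$ by $U^3$ and of Bohr sets by $2$-step nilsequence structure, and you correctly locate the delicate step as the equidistribution input needed to control the structured count along a single difference $d$.

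That said, what you have written for $k=4$ is a plan rather than a proof. Phrases such as ``I expect this to be the main obstacle'' and ``one must feed the quantitative equidistribution theory \ldots into a convexity argument'' defer precisely the work that carries all the difficulty: in the Green--Tao argument one needs a counting lemma for $4$-APs against irrational nilsequences, a factorisation into smooth, rational and highly equidistributed pieces, and a careful passage to a subprogression on which the structured part is essentially constant before one can invoke convexity. None of this is supplied, so as it stands the $k=4$ case is not proved. The $k=3$ sketch is close to complete in outline, though even there the step ``on which the three-term progression averages of $g$ coincide up to $o(1)$ with its coset averages'' hides a nontrivial verification that the Bohr set is genuinely invariant enough for this to hold.
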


Note that if we choose $A$ at random by putting each element of $[N]$ in $A$ independently with probability $\alpha$, then each arithmetic progression is in $A$ with probability $\alpha^k$. It follows that the bound $(\alpha^k - \e)N$ is best possible. In the same paper, Bergelson, Host and Kra also provide a counter-example showing that the conjecture is false when $k=5$. Conjecture \ref{BHK} is now a theorem. Indeed, in 2005 Green \cite{ARLoriginal} proved the $k=3$ case of the above conjecture, by first  proving an arithmetic regularity lemma. This arithmetic regularity lemma is a kind of analogue of Szemer\'{e}di's graph regularity lemma, in the setting of abelian groups instead of graphs. In 2010 Green and Tao \cite{ARLadvanced} proved a more powerful arithmetic regularity lemma, and used it to prove Conjecture~\ref{BHK} for $k=4$ as well as $k=3$.

A different way to generalise Szemer\'{e}di's theorem on arithmetic progressions is to look for higher-dimensional patterns in subsets of $[N]^d$. This is known as the Multidimensional Szemer\'{e}di Theorem. The Corners Theorem is a special case of this.

\begin{theorem}\label{cornerstheorem} [Corners Theorem]
Let $\e > 0$. There exists an $N_0=N_0(\e)$ such that for any $N \ge N_0$ and any $A \subset [N]^2$ with $|A| \ge \e N^2$, there exist $x, y \in [N]$ and an integer $d \neq 0$ such that $(x, y), (x+d, y)$ and $(x, y+d)$ are all in $A$. The triple of points $((x, y), (x+d, y), (x, y+d))$ is called a corner.
\end{theorem}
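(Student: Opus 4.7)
My plan is to deduce the Corners Theorem from the Triangle Removal Lemma, following the Ruzsa--Szemer\'edi encoding of a dense set into an auxiliary tripartite graph $H$ whose triangles correspond to (possibly degenerate) corners in $A$.

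I would take three vertex classes $X = Y = [N]$ and $Z = [2N]$ and place edges as follows: join $x \in X$ to $y \in Y$ whenever $(x, y) \in A$; join $x \in X$ to $z \in Z$ whenever $(x, z-x) \in A$; and join $y \in Y$ to $z \in Z$ whenever $(z-y, y) \in A$. A triangle on $(x, y, z)$ in $H$ then means exactly that $(x, y)$, $(x, y+d)$ and $(x+d, y)$ all lie in $A$, where $d = z - x - y$. Non-zero $d$ gives a genuine corner, while each point $(a, b) \in A$ produces exactly one ``trivial'' triangle (take $z = a + b$). A quick case check shows these $|A| \ge \e N^2$ trivial triangles are pairwise edge-disjoint, since any two of the three coordinates $a$, $b$, $a+b$ already determine the third.

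Now suppose for contradiction that $A$ contains no non-trivial corner. Then every triangle of $H$ is one of the trivial ones, so making $H$ triangle-free requires deleting at least one edge from each, hence at least $\e N^2$ edges in total. On the other hand, the Triangle Removal Lemma asserts that any graph on $n$ vertices containing only $o(n^3)$ triangles can be made triangle-free by deleting $o(n^2)$ edges; applied to $H$, which has $n = O(N)$ vertices and only $|A| \le N^2 = o(N^3)$ triangles, this produces a triangle-free subgraph using only $o(N^2)$ deletions. For $N$ larger than some $N_0(\e)$ this contradicts the $\e N^2$ lower bound, completing the proof.

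The main obstacle is the Triangle Removal Lemma itself: its standard proof goes via Szemer\'edi's Regularity Lemma and so yields only tower-type bounds on $N_0(\e)$. Since Theorem~\ref{cornerstheorem} is purely qualitative I would cite it as a black box rather than reprove it. A minor ancillary concern is checking that the three edge-classes together really do encode the corner condition and that the edge-disjointness claim holds; both are routine once the setup is written out.
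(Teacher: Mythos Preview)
Your argument is correct and is the well-known Solymosi proof of the Corners Theorem via the Triangle Removal Lemma; the encoding, the edge-disjointness check, and the final counting contradiction are all fine. However, the paper does not actually prove Theorem~\ref{cornerstheorem}: it is quoted as background, with references to Ajtai--Szemer\'edi, Shkredov, and Furstenberg, and then used only as motivation for the variant studied in the paper. So there is no ``paper's own proof'' to compare against; your proposal simply supplies one of the standard proofs that the paper chose to cite rather than reproduce.
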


The Corners Theorem was first proved by Ajtai and Szemer\'{e}di in 1974 \cite{ajtaiszemeredi74}. Now, several proofs exist, see for example \cite{shkredov06} and \cite{furstenberg81}. In light of the Bergelson-Host-Kra conjecture, it is natural to make the following conjecture.

\begin{conjecture}\label{benshope}
Let $0<\alpha<1$ and $\e>0$. There exists an $N_0$ such that for any $N \ge N_0$ and any $A \subset [N]^2$ with $|A| \ge \alpha N^2$, there is some integer $d \neq 0$ such that there are at least $(\alpha^3-\e)N^2$ corners with that given $d$ all of whose vertices lie in $A$.
\end{conjecture}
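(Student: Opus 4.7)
The most natural line of attack is to imitate Green's proof of Conjecture \ref{BHK} for $k=3$. Writing $f = 1_A$, we have
\[
|S_d| = \sum_{x,y \in G} f(x,y)\, f(x+d, y)\, f(x, y+d),
\]
and the goal is to show that $\max_{d \neq 0} |S_d| \geq (\alpha^3 - \e) N^2$. The first step is to apply an arithmetic regularity lemma to $f$, regarded as a function on the abelian group $G \times G$, and decompose $f = f_{\mathrm{str}} + f_{\mathrm{psr}} + f_{\mathrm{sml}}$ into a structured piece (constant on cells of a Bohr-set partition, or on cosets of a subspace when $G = \mathbb{F}_2^n$), a Gowers-uniform pseudorandom piece, and a small $L^2$ error.

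Expanding the corner count trilinearly gives $27$ terms. The terms involving at least one factor $f_{\mathrm{psr}}$ should be negligible after Cauchy--Schwarz in $d$, which brings in a box-norm quantity controlled by $\|f_{\mathrm{psr}}\|_{U^2}$. The $f_{\mathrm{sml}}$ terms are absorbed by choosing the regularity parameters fine enough. So the main contribution comes from the fully structured term, and the task reduces to producing a specific $d \neq 0$ for which
\[
\sum_{x,y} f_{\mathrm{str}}(x,y)\, f_{\mathrm{str}}(x+d,y)\, f_{\mathrm{str}}(x,y+d)
\]
is close to $\alpha^3 N^2$.

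To find such a $d$, I would average over $d$ lying in a Bohr set compatible with the structure of $f_{\mathrm{str}}$, so that both $f_{\mathrm{str}}(x+d,y)$ and $f_{\mathrm{str}}(x,y+d)$ are close to $f_{\mathrm{str}}(x,y)$ in $L^2$. The averaged corner count would then collapse to roughly $\sum f_{\mathrm{str}}^3$, and by Jensen, $N^{-2} \sum f_{\mathrm{str}}^3 \geq \alpha^3$, giving the desired bound.

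The step I expect to fight hardest is the last one. A corner is a genuinely two-dimensional configuration: the three points involve two independent coordinate directions, and the density of $f_{\mathrm{str}}$ along a row need not match its density along a column. If $f_{\mathrm{str}}(x,y)$ behaves like a product $g(x)h(y)$ with $N^{-2}\sum_{x,y}g(x)h(y) = \alpha$, the averaged corner count degenerates into a joint-distribution question for the random variables $g(X)$ and $h(Y)$ rather than collapsing cleanly to $\alpha^3$. This is the place where the strategy is most at risk of breaking, and where I would look first either for a subtle decoupling inequality that still yields $\alpha^3$, or, failing that, for a counter-example of the sort the abstract hints at.
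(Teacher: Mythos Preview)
Your instinct at the end is exactly right, and it is decisive: the conjecture is false, so no version of this argument can succeed. The paper does not prove Conjecture~\ref{benshope}; it disproves it. What you call the ``joint-distribution question'' for the structured piece is precisely the quantity the paper isolates: writing the structured part as a function of the row, the column, \emph{and} the anti-diagonal $x+y$ (three directions, not two---your model $g(x)h(y)$ is one variable short), the averaged corner count over $d$ in the structured subgroup collapses to
\[
T(f) = \E\bigl(\E(f\mid X,Y)\,\E(f\mid X,Z)\,\E(f\mid Y,Z)\bigr)
\]
for independent $X,Y,Z$, and the question becomes whether $T(f)\ge \alpha^3$ whenever $\E f=\alpha$. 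The paper shows this fails: there are $f$ with $T(f)\le C\alpha^{3.13}$, and a random construction (Section~\ref{construction}) turns any such $f$ into an actual subset $A\subset G\times G$ with $|S_d|\le (T(f)+o(1))N^2$ for every $d\neq 0$.

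So the gap in your outline is not a missing technical lemma but a genuine obstruction: the Jensen step $N^{-2}\sum f_{\mathrm{str}}^3\ge\alpha^3$ is simply the wrong inequality once you account for the anti-diagonal structure, because the three conditional expectations $\E(f\mid X,Y)$, $\E(f\mid X,Z)$, $\E(f\mid Y,Z)$ are not the same random variable and their product can have expectation strictly below $\alpha^3$. Everything upstream of that---the regularity decomposition, discarding the pseudorandom and small pieces, averaging $d$ over a structured set---is sound, and indeed the paper runs exactly that machinery in Section~\ref{regularity} to prove the matching lower bound $M(\alpha)\ge m'(\alpha)$ for $\mathbb{F}_2^n$. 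The point is that the floor it produces is $m(\alpha)$, not $\alpha^3$.
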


We will show that this conjecture is in fact false. Indeed, there is a construction in which there are at most $C\alpha^{3.13}N^2$ corners for any given $d \neq 0$, where $C$ is an absolute constant. Chu \cite{chu11} considered an ergodic theory analogue of the above problem, and proved that the exponent 3 is not right in that setting. Our construction is broadly the same as Chu's, however her construction does not traslate directly to our setting and we need to use concentration of measure arguments. Our construction is also an improvement on Chu's in that we get a better bound on the exponent of $\alpha$.

We will be working with abelian groups, rather than the set $[N]$, but one can easily deduce the result for $[N]$ by embedding it into $\mathbb{Z}/N\mathbb{Z}$ in the usual way.

For a family of abelian groups $(G_n)_{n=1}^\infty$ with $|G_n|$ tending to infinity, we define the threshold density for corners $M(\alpha)$ as follows.

$M(\alpha)$ is the largest $M \in [0,1]$ such that for any $\e > 0$ there is an $N_0$ such that for all groups $G$ in our family with $|G| = N \ge N_0$, and for all subsets $A \subset G \times G$ of density at least $\alpha$, there is a $d \in G \setminus \{ 0 \}$ with $|S_d|>(M - \epsilon) N^2$.

Over the course of this paper, we relate the problem of finding the threshold density for corners to the following problem about random variables.

Let $f$ be a piecewise constant function on $[0,1]^3$, taking values in $[0,1]$. By piecewise constant, we mean that there is a positive integer $m$ and real numbers $0 = x_0 \le x_1 \le \dots \le x_m = 1$, $0 = y_0 \le y_1 \le \dots \le y_m = 1$ and $0 = z_0 \le z_1 \le \dots \le z_m = 1$ such that $f$ is constant on $[x_i, x_{i+1}) \times [y_j, y_{j+1}) \times [z_k,z_{k+1})$ for each $0 \le i,j,k \le m-1$. We define
\[ T(f) := \int_0^1 \int_0^1 \int_0^1 \left( \int_0^1 f(x, y, z') \df z' \right) \left( \int_0^1 f(x, y', z) \df y' \right) \left( \int_0^1 f(x', y, z) \df x' \right) \df x \df y \df z\]
and
\[ m(\alpha) := \inf_f T(f) \]
where the infimum is taken over all piecewise constant $f: [0,1]^3 \to [0,1]$ with
\[ \int_0^1 \int_0^1 \int_0^1 f(x, y, z) \df x \df y \df z = \alpha. \]
Note that if we let $X, Y$ and $Z$ be independent $U[0,1]$ random variables, and setting $f = f(X, Y, Z)$, we have
\[ T(f) = \E(\E(f|X, Y)\E(f|X, Z)\E(f|Y, Z)) \]
and
\[ m(\alpha) = \inf_f T(f) \]
where the infimum is taken over all piecewise constant $f$ with $\E(f) = \alpha$. 

Our main results are the following:

\begin{theorem}\label{pt1}
For the family of all abelian groups, $M(\alpha) \le m(\alpha)$.
\end{theorem}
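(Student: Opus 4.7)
My goal is to prove, for every piecewise constant $f : [0,1]^3 \to [0,1]$ with $\int f = \alpha$ and every $\e > 0$, the existence of an abelian group $G$ of arbitrarily large order $N$ and a subset $A \subset G \times G$ with $|A| \ge \alpha N^2$ and $|S_d| \le (T(f) + \e) N^2$ for every nonzero $d$. Taking the infimum over piecewise constant $f$ then forces $M(\alpha) \le m(\alpha)$. Since the family of all abelian groups contains an abelian group of every order, it suffices to produce such a $G, A$ for an unbounded sequence of $N$. After refining the partition defining $f$ I may assume $f$ descends to a discrete $\tilde f : [m]^3 \to [0,1]$ on a uniform $1/m$-grid; I then also add a small constant $\e/10$ to the values of $\tilde f$ so that its average $\tilde\alpha$ slightly exceeds $\alpha$ while $T(\tilde f)$ remains within $\e/10$ of $T(f)$.

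The construction is a two-layer random one. Fix $G$ with $m \mid |G| = N \gg m/\e$. First, pick three i.i.d.\ colourings $\chi_1, \chi_2, \chi_3 : G \to [m]$ by sampling each value $\chi_i(g)$ independently and uniformly in $[m]$. Then, conditionally on the $\chi_i$, put each pair $(x, y) \in G^2$ into $A$ independently with probability $\tilde f(\chi_1(x), \chi_2(y), \chi_3(x+y))$. For $d \ne 0$ the three points $(x,y),(x+d,y),(x,y+d)$ of a $d$-corner are distinct, so the corner indicator is a product of three independent Bernoullis and
\[
  \E[|S_d| \mid \chi] = \sum_{x, y \in G} \tilde f(\chi_1(x), \chi_2(y), \chi_3(x+y)) \, \tilde f(\chi_1(x+d), \chi_2(y), \chi_3(x+y+d)) \, \tilde f(\chi_1(x), \chi_2(y+d), \chi_3(x+y+d)).
\]
Because the $\chi_i$ are independent and for $d \ne 0$ each pair $(\chi_i(u), \chi_i(u+d))$ is exactly uniform on $[m]^2$, taking expectation over $\chi$ yields $\E|S_d| = T(\tilde f) N^2$ and $\E|A| = \tilde\alpha N^2$, which differ from $T(f) N^2$ and $\alpha N^2$ by at most $\e N^2 / 10$.

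The main obstacle is concentration, since a single realisation $(\chi, A)$ must control all $N-1$ nonzero values of $d$ simultaneously. I would handle the two randomness layers separately. Conditional on $\chi$, $|S_d|$ is a function of the $N^2$ independent pair-coins, each coin appearing in at most three $d$-corners (as the first, second or third point), so McDiarmid's bounded-differences inequality yields $\Pr\bigl[\bigl|\,|S_d| - \E[|S_d| \mid \chi]\,\bigr| > \tfrac{\e}{4} N^2 \bigm| \chi\bigr] \le \exp(-c \e^2 N^2)$. For the colouring layer I plan to run a vertex-exposure martingale revealing each $\chi_i(g)$ one $(i, g)$ at a time (over $3N$ steps); altering a single colour changes the triple sum above by $O(N)$, because in each of its three occurrences of $\chi_i$ only $O(N)$ summands are affected and each by $O(1)$, so Azuma's inequality gives a tail of the form $\exp(-c' \e^2 N)$. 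Both bounds easily survive the union bound over $d$. The most delicate step to verify is this $O(N)$ single-step Lipschitz constant uniformly in $d$ and in the remaining colours, which requires a careful case analysis on which of the six positions $x, y, x+y, x+d, y+d, x+y+d$ can coincide with the re-coloured vertex. With concentration in hand, a single $(\chi, A)$ witnesses both $|A| \ge \alpha N^2$ and $|S_d| \le (T(f) + \e) N^2$ for every $d \ne 0$, and letting $N \to \infty$ along a suitable sequence completes the argument.
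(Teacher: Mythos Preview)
Your construction is exactly the paper's: random labellings of $G$ in three directions followed by independent coin flips with bias $f$ evaluated at the three labels. The paper works in the symmetrised $P = \{x+y+z=0\} \subset G^3$ and uses continuous $U[0,1]$ labels rather than discretising to a uniform grid, but these are cosmetic differences. (Your refinement to a uniform $1/m$-grid is not literally always possible---the breakpoints of a piecewise constant $f$ need not be rational---but this is immaterial: either approximate, or let the $\chi_i$ take values in $[m]$ with probabilities matching the actual cell widths, or simply use $U[0,1]$ labels as the paper does.)

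Where you do diverge is in the concentration step. Your two-layer McDiarmid/Azuma argument is correct and your Lipschitz estimates are right (the ``most delicate step'' you flag is in fact routine: a re-coloured vertex $g$ under $\chi_i$ enters the $d$-corner sum only through one of at most two values of the relevant coordinate, each fixing one coordinate and leaving $O(N)$ summands). The paper avoids the two layers entirely by a simpler device: it notes that $|A|$ and each $|S_d|$ can be written as averages, over translates $p \in P$, of $|(Q+p) \cap A|$ and $|(Q_d+p) \cap S_d|$, where $Q, Q_d$ are ``independent'' sets of size $\Theta(N)$ chosen so that the summands are genuinely mutually independent Bernoulli variables (with respect to \emph{both} randomness layers at once). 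A single Hoeffding bound per translate, plus a union bound over the $N^2$ translates and the $N-1$ values of $d$, then suffices. This buys a shorter proof with no martingale machinery; your route buys sharper tail bounds on the inner layer ($\exp(-c\e^2 N^2)$ versus $\exp(-c\e^2 N)$), though that is not needed here.
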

\begin{theorem}\label{pt2}
Let $m':[0,1] \to [0,1]$ be any convex function of $\alpha$ satisfying $m'(\alpha) \le m(\alpha)$ for all $\alpha \in [0,1]$. For the family $G_n = \mathbb{F}_2^n$, $M(\alpha) \ge m'(\alpha)$.
\end{theorem}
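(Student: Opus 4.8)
The plan is to prove the stronger statement that for $N=2^n$ large the \emph{average} of $|S_d|$ over $d\in G\setminus\{0\}$ already exceeds $(m'(\alpha)-\e)N^2$; since $|S_0|=|A|=O(N^2)$ is negligible this reduces to showing that
\[ \Lambda(A):=\E_{x,y,s\in G}\bigl[\mathbf{1}_A(x,y)\,\mathbf{1}_A(x+s,y)\,\mathbf{1}_A(x,y+s)\bigr]\ \ge\ m'(\alpha)-\tfrac{\e}{2}. \]
The first step is to read $\Lambda(A)$ as a tripartite triangle count: the substitution $s=x+y+w$ gives $\Lambda(A)=\E_{x,y,w}[F_1(x,y)F_2(y,w)F_3(x,w)]$, where $F_1(x,y)=\mathbf{1}_A(x,y)$, $F_2(y,w)=\mathbf{1}_A(y+w,y)$ and $F_3(x,w)=\mathbf{1}_A(x,x+w)$ are three invertible linear re-parametrisations of $\mathbf{1}_A$, each of mean $\alpha$.

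I would then apply an arithmetic regularity lemma over $\mathbb F_2$ (equivalently, weak/Frieze--Kannan regularity phrased with cosets of subspaces) to produce a subspace $H\le\mathbb F_2^n$ of index $K=K(\e)$, bounded independently of $n$, such that $\mathbf{1}_A$ differs from its average over the cosets of $H\times H$ by a function of cut-type norm $\ll\e$. As $F_1,F_2,F_3$ are linear images of $\mathbf{1}_A$ the same $H$ regularises all three, and since triangle densities are continuous in the cut norm, $\Lambda(A)$ lies within $\e/2$ of the same count computed for the coset-averages; a short computation identifies the latter with $\mathcal C(d):=\E_{u,v,s\in Q}[\,d(u,v)\,d(u+s,v)\,d(u,v+s)\,]$, where $Q=G/H\cong\mathbb F_2^k$ (with $k=\log_2 K$) and $d\colon Q\times Q\to[0,1]$ is the induced density matrix, $\E d=\alpha$. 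Thus it suffices to prove the clean finite inequality $\mathcal C(d)\ge m'(\alpha)$ for every $d\colon(\mathbb F_2^k)^2\to[0,1]$ of mean $\alpha$.

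The cleanest route to this would be to exhibit a piecewise constant $f\colon[0,1]^3\to[0,1]$ with $\E f=\alpha$ and $T(f)=\mathcal C(d)$: identifying the $2^k$ colours of $d$ with equal intervals of $[0,1]$, one wants the three two-dimensional marginals of $f$ to reproduce the three linear images of $d$ occurring in $\mathcal C(d)$, which would even give $\mathcal C(d)\ge m(\alpha)\ge m'(\alpha)$ with no appeal to convexity. The difficulty is that these three marginals are twisted by the group law and need not be jointly realisable by a single $[0,1]$-valued $f$. So instead I would induct on $k$. The base case $k=0$ is $\mathcal C(d)=\alpha^3\ge m(\alpha)\ge m'(\alpha)$ (the middle inequality because $m(\alpha)\le T(\text{const }\alpha)=\alpha^3$). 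For the step, split $Q=\mathbb F_2^{k-1}\oplus\mathbb F_2$ and write $d$ as a $2\times2$ array of blocks $d_{ij}\colon(\mathbb F_2^{k-1})^2\to[0,1]$ with means $\alpha_{ij}$ (so $\tfrac14\sum_{ij}\alpha_{ij}=\alpha$); expanding in these blocks yields
\[ \mathcal C(d)=\tfrac18\!\!\!\sum_{(a,b,c)\in\mathbb F_2^3}\!\!\!\mathcal C\bigl(d_{ab},\,d_{a+c,\,b},\,d_{a,\,b+c}\bigr),\qquad \mathcal C(f,g,h):=\E_{p,q,r}\bigl[f(p,q)\,g(p+r,q)\,h(p,q+r)\bigr]. \]
The four terms with $c=0$ are genuine corner counts $\mathcal C(d_{ij})$; by the inductive hypothesis each is $\ge m'(\alpha_{ij})$, and convexity of $m'$ (Jensen over $\tfrac14\sum_{ij}\alpha_{ij}=\alpha$) turns $\tfrac18\sum_{ij}m'(\alpha_{ij})$ into $\ge\tfrac12 m'(\alpha)$.

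The hard part is to recover the other $\tfrac12 m'(\alpha)$ from the four ``mixed'' counts $\mathcal C(d_{ab},d_{\bar a,b},d_{a,\bar b})$ arising from $c=1$: bounding these below by $0$ wastes exactly a factor of two, so one genuinely needs a Cauchy--Schwarz/H\"older-type comparison of mixed with diagonal corner counts. Packaged inductively this means replacing the hypothesis by a bound $\mathcal C(f,g,h)\ge\theta(\E f,\E g,\E h)$ for a suitable $\theta$ (morally $\theta(\alpha,\alpha,\alpha)=m'(\alpha)$, with $\theta(\phi,\gamma,\eta)\le\phi\gamma\eta$ for the base case) subject to the combinatorial inequality $\tfrac18\sum_{(a,b,c)\in\mathbb F_2^3}\theta(\phi_{ab},\gamma_{a+c,b},\eta_{a,b+c})\ge\theta(\bar\phi,\bar\gamma,\bar\eta)$ for all $2\times2$ arrays, where $\bar\phi=\tfrac14\sum_{ij}\phi_{ij}$, etc. I expect that convexity of $m'$ is exactly what must be encoded in $\theta$ to make this hold --- it already fails for $\theta(\phi,\gamma,\eta)=\phi\gamma\eta$, which is consistent with $m(\alpha)<\alpha^3$ being attainable --- and that this is where the passage from $m$ to a convex minorant is genuinely forced and where the substance of the argument lies; the regularity reduction of the first two steps, by contrast, is routine.
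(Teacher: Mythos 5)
Your opening reduction is already false, and the error propagates through the rest of the sketch. You claim that for large $N=2^n$ the average of $|S_d|$ over $d\in G\setminus\{0\}$ exceeds $(m'(\alpha)-\e)N^2$, i.e.\ $\Lambda(A)\ge m'(\alpha)-\e/2$. This cannot hold when $m'$ is any polynomially-decaying minorant of $m$ such as $\alpha^4$. Take $B\subset\mathbb F_2^k\times\mathbb F_2^k$ corner-free of density $\beta$ (Behrend-type constructions over $\mathbb F_2$ give $\beta\approx\exp(-c\sqrt k)$), choose $k$ so that $\beta\approx\alpha$, and lift: with $\pi\colon\mathbb F_2^n\to\mathbb F_2^k$ the coordinate projection, set $A=\{(x,y):(\pi x,\pi y)\in B\}$. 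Then $A$ has density $\alpha$, and a corner in $A$ forces $\pi(d)=0$, so only $2^{n-k}$ values of $d$ contribute, and
\[
\Lambda(A)=\alpha\cdot 2^{-k}\approx\alpha\exp\bigl(-c'(\log(1/\alpha))^2\bigr),
\]
which is super-polynomially small in $\alpha$ and in particular $\ll\alpha^4$ for small $\alpha$. So the average over all $d$ is not polynomially bounded below; the existence of a \emph{single good} $d$ is the whole content of the theorem, exactly as in the Bergelson--Host--Kra situation. By the same token your proposed ``clean finite inequality'' $\mathcal C(d)\ge m'(\alpha)$ for all density matrices $d$ on $\mathbb F_2^k$ of mean $\alpha$ is false: regularising the lifted set above produces, within cut-norm $O(\e)$, a matrix $d$ with $\mathcal C(d)\approx\Lambda(A)\ll m'(\alpha)$. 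The inductive scheme you then propose is therefore attempting to prove a false statement (and you yourself leave the crucial mixed-count step as a conjecture).

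The fix is the one the paper makes: average over $d$ only in a bounded-index subgroup $W$ supplied by a regularity lemma, not over all of $G$. The paper partitions $P+W^3$ into ``outer boxes'' $V=W^3+p$, inside each of which it further partitions the three coordinates into quasirandom pieces; within a regular box the triangle counting lemma gives $\ge(m(\alpha(V))+o(1))|W|^3$ corners, where $\alpha(V)$ is the local density and $o(1)\to0$ as $\e\to0$. Summing over $V$ and using Jensen with the convexity of $m'$ (this is precisely where the hypothesis enters, because the local densities $\alpha(V)$ fluctuate around $\alpha$), one finds $\ge(m'(\alpha)+o(1))N^2|W|$ corners with $d\in W$, and pigeonhole over the $|W|$ values of $d$ gives the single $d\in W\setminus\{0\}$ you need. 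No comparison of ``mixed'' and ``diagonal'' corner counts is required; your difficulty with the $c=1$ terms is an artefact of trying to control the global average.
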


Note that if $m(\alpha)$ is itself convex, then we may take $m'=m$ and combine the two theorems to get $M(\alpha) = m(\alpha)$ for $G_n = \mathbb{F}_2^n$.
In Section~\ref{construction} we prove Theorem~\ref{pt1}, using a random construction.
In section~\ref{regularity} we use regularity techniques to prove Theorem~\ref{pt2}.  The method generalises to the case $G_n = \mathbb{F}_p^n$, with $p$ any fixed prime.
Finally in section~\ref{bounds} we prove the bounds $\alpha^4 \le m(\alpha) \le C\alpha^{3.13}$ for an absolute constant $C$, and so deduce that $M(\alpha) \le C\alpha^{3.13}$ in the general case, and $M(\alpha) \ge \alpha^4$ in the finite field case.

We will work with an alternative formulation of the corners problem, which is more natural in some respects. Instead of working in $G \times G$, we work in the subset $P \subset G^3$ given by $P =\{(x, y, z) \in G^3 | x+y+z = 0\}$, and a corner is a triple of the form $(x+d, y, z), (x, y+d, z), (x, y, z+d)$. This is clearly seen to be equivalent to the original formulation, by forgetting one of the coordinates. We use this formulation because it treats the three vertices of the corner equally, whereas in the original formulation, the vertex $(x, y)$ was special.

We will also need the following Lemma, which is essentially giving us an alternative version of the random variable problem.
\begin{lemma}\label{discrete}
Given a piecewise constant $f : [0, 1]^3 \to [0, 1]$, we can construct independent random variables $X', Y'$ and $Z'$ which each take only finitely many values, and a function $f'=f'(X', Y', Z')$ taking values in $[0,1]$ with $\E(f') = \E(f)$ and $T(f') = T(f)$.

Conversely, given a trio of random variables $X', Y', Z'$ each taking only finitely many values, and a function $f' = f'(X', Y', Z')$ taking values in $[0,1]$, we may construct a piecewise constant $f : [0,1]^3 \to [0,1]$ with $\E(f) = \E(f')$ and $T(f) = T(f')$.
\end{lemma}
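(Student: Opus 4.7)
The plan is to observe that both sides encode essentially the same combinatorial data: a triple of discrete probability distributions together with a function on the product of their supports, and that the two constructions are mutually inverse up to relabelling. Both directions then reduce to a routine unpacking of integrals into finite sums, or vice versa.

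For the first direction, I take the partition points $0=x_0\le\dots\le x_m=1$, $0=y_0\le\dots\le y_m=1$, $0=z_0\le\dots\le z_m=1$ given in the definition of piecewise constant, and let $X',Y',Z'$ be independent with $\Pr(X'=i)=x_{i+1}-x_i$, $\Pr(Y'=j)=y_{j+1}-y_j$, $\Pr(Z'=k)=z_{k+1}-z_k$, where I adopt the convention that an atom of probability zero can be discarded. I then define $f'(i,j,k)$ to be the constant value of $f$ on $[x_i,x_{i+1})\times[y_j,y_{j+1})\times[z_k,z_{k+1})$. Clearly $f'$ takes values in $[0,1]$, and since the cell $(i,j,k)$ has Lebesgue measure $(x_{i+1}-x_i)(y_{j+1}-y_j)(z_{k+1}-z_k)=\Pr(X'=i,Y'=j,Z'=k)$, the integral defining $\E(f)$ equals the sum defining $\E(f')$.

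For $T$, I verify that the inner integrals in the definition of $T(f)$ become conditional expectations. Indeed, if $(x,y)\in[x_i,x_{i+1})\times[y_j,y_{j+1})$, then
\[
\int_0^1 f(x,y,z')\df z'=\sum_k(z_{k+1}-z_k)f'(i,j,k)=\E(f'(X',Y',Z')\mid X'=i,Y'=j),
\]
and similarly for the other two marginals; so the outer triple integral collapses to $\E(\E(f'\mid X',Y')\E(f'\mid X',Z')\E(f'\mid Y',Z'))=T(f')$. This gives $T(f)=T(f')$.

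The converse direction is the same argument in reverse. Given $X',Y',Z'$ each taking finitely many values (which I relabel as $0,1,\ldots,m-1$ after padding with zero-probability atoms so that all three supports have the same size), I set $x_i:=\sum_{i'<i}\Pr(X'=i')$ and analogously define $y_j$ and $z_k$; these yield partitions of $[0,1]$ with cell lengths equal to the atom probabilities. I then define $f$ to be the constant $f'(i,j,k)$ on $[x_i,x_{i+1})\times[y_j,y_{j+1})\times[z_k,z_{k+1})$, which is piecewise constant in the required sense and still takes values in $[0,1]$. The same computation as above, read the other way, gives $\E(f)=\E(f')$ and $T(f)=T(f')$. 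There is no real obstacle here; the only point requiring a small care is the padding needed to make the supports of the three variables have common cardinality $m$, which is harmless.
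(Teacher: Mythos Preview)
Your proof is correct and follows essentially the same approach as the paper: both directions pass between the piecewise constant picture and the discrete picture via the obvious correspondence between cell lengths and atom probabilities, with the equality $T(f)=T(f')$ following because the inner integrals in $T(f)$ are precisely the conditional expectations $\E(f'\mid X',Y')$, etc. Your write-up is in fact slightly more explicit than the paper's in spelling out this last point.
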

\begin{proof}
\underline{First part:} Suppose we are given such a function $f$. Let $m$, a positive integer, and $0 = x_0 \le x_1 \le \dots \le x_m = 1$, $0 = y_0 \le y_1 \le \dots \le y_m = 1$ and $0 = z_0 \le z_1 \le \dots \le z_m = 1$ be such that $f$ is constant on $[x_i, x_{i+1}) \times [y_j, y_{j+1}) \times [z_k,z_{k+1})$ for each $0 \le i,j,k \le m-1$. Given $X, Y, Z \sim U[0,1]$ independent we define new random variables $X', Y'$ and $Z'$ taking values in $[m]$ by setting $X'$ to be the $i$ such that $X \in [x_{i-1}, x_i)$, $Y'$ to be the $j$ such that $Y \in [y_{j-1}, y_j)$ and $Z'$ to be the $k$ such that $Z \in [z_{k-1}, z_k)$. The independence of $X', Y'$ and $Z'$ follows from the indepence of $X, Y$ and $Z$. By the piecewise constant assumption, the value of the function $f$ is determined by the values of $X', Y'$ and $Z'$, so there exists a function $f' : [m]^3 \to [0,1]$ such that $f(X, Y, Z) = f'(X', Y', Z')$. The first part follows easily.

\underline{Second part:} Now suppose we are given indepent random variables $X', Y'$ and $Z'$, each taking only finitely many values. Without loss of generality, we may assume that $X', Y'$ and $Z'$ take values in $[m]$ for some positive integer $m$. This is because the actual values taken by $X', Y'$ and $Z'$ are unimportant, so we can simply relable them. We then define real numbers $0 = x_0 \le x_1 \le \dots \le x_m = 1$, $0 = y_0 \le y_1 \le \dots \le y_m = 1$ and $0 = z_0 \le z_1 \le \dots \le z_m = 1$ by setting $x_i = \mathbb{P}(X' \le i), y_i = \mathbb{P}(Y' \le i)$ and $z_i = \mathbb{P}(Z' \le i)$. Then $\mathbb{P}(X' = i) = x_i - x_{i-1} = \mathbb{P}(X \in [x_{i-1}, x_i))$ so we may couple $X$ with $X'$ in the same way as we did in the first part. The same applies, of course, for $Y$ with $Y'$ and $Z$ with $Z'$. We can then construct a piecewise constant $f : [0,1]^3 \to [0,1]$ with $f(X, Y, Z) = f'(X', Y', Z')$, and so the second part follows.
\end{proof}
\section{Construction}\label{construction}
In what follows, we will need some properties of the functions $m$ and $M$, which we will establish now.

\begin{lemma}\label{continuity}
Let $m' : [0,1] \to [0,1]$ be convex with $m'(\alpha) \le m(\alpha)$ for all $\alpha \in [0,1]$. $M$, $m$ and $m'$ are all increasing functions of $\alpha$, and $m$ is uniformly continuous.
\end{lemma}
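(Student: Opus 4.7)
I would handle the four assertions in turn.

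Monotonicity of $M$ is immediate from the definition: a larger $\alpha$ restricts the class of admissible $A$ (those of density $\ge \alpha$) and so can only increase the guaranteed value of $M$. Monotonicity of $m$ follows by rescaling: given a piecewise constant $f$ with $\E(f) = \alpha_2$, the function $(\alpha_1/\alpha_2) f$ is still piecewise constant, still $[0,1]$-valued, has $\E = \alpha_1$, and satisfies $T((\alpha_1/\alpha_2) f) = (\alpha_1/\alpha_2)^3 T(f)$; infimising yields $m(\alpha_1) \le (\alpha_1/\alpha_2)^3 m(\alpha_2) \le m(\alpha_2)$. For $m'$, first observe that $m(0) = 0$ (the only non-negative $f$ with $\E(f) = 0$ is $f \equiv 0$), whence $m'(0) \le m(0) = 0$; combined with $m'(0) \in [0,1]$ this forces $m'(0) = 0$. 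Convexity then yields $m'(\alpha_1) \le (\alpha_1/\alpha_2) m'(\alpha_2) \le m'(\alpha_2)$ for $0 < \alpha_1 < \alpha_2 \le 1$, by writing $\alpha_1$ as a convex combination of $0$ and $\alpha_2$.

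The substantial content of the lemma is uniform continuity of $m$, which I would prove by establishing a Lipschitz estimate. Given $0 \le \alpha_1 < \alpha_2 \le 1$ with $\e := \alpha_2 - \alpha_1$ and a near-optimiser $f_1$ for $m(\alpha_1)$, set
\[ f_2 := (1 - \lambda) f_1 + \lambda, \qquad \lambda := \frac{\e}{1 - \alpha_1} \]
(the case $\alpha_1 = 1$ is vacuous). Then $f_2$ is piecewise constant, $[0,1]$-valued, and satisfies $\E(f_2) = \alpha_2$. Writing $h := f_2 - f_1 = \lambda(1 - f_1)$ and letting $F_1, F_2, F_3$ and $H_1, H_2, H_3$ denote the three pairwise conditional expectations of $f_1$ and $h$ respectively, expand
\[ T(f_2) = \E\bigl[(F_1 + H_1)(F_2 + H_2)(F_3 + H_3)\bigr]. \]
This gives $T(f_1)$ plus seven cross terms; each cross term contains at least one $H_i$, and since all $F_j, H_k \in [0,1]$ (they are conditional expectations of $[0,1]$-valued variables), bounding the other factors by $1$ shows that each cross term is at most $\E(H_i) = \E(h) = \e$. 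Hence $T(f_2) \le T(f_1) + 7\e$; infimising over $f_1$ gives $m(\alpha_2) \le m(\alpha_1) + 7\e$, and combined with the monotonicity above this yields $|m(\alpha_2) - m(\alpha_1)| \le 7|\alpha_2 - \alpha_1|$.

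The only genuinely delicate step is the choice of perturbation. The linear interpolation $(1 - \lambda) f_1 + \lambda$ towards the constant function $1$ is designed precisely so that piecewise-constancy, the $[0,1]$-bound on $f_2$, and the identity $\E(h) = \e$ all hold simultaneously; it is these three properties (and nothing deeper) that make the cross-term bounds go through. The constant $7$ is not sharp, but suffices for uniform continuity.
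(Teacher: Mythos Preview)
Your proof is correct and uses the same perturbation as the paper, namely $f \mapsto (1-\lambda)f + \lambda$. The one noteworthy difference is in the bookkeeping for uniform continuity. The paper parametrises by the perturbation size $\lambda$ and bounds the cross terms pointwise, obtaining $T(f_\lambda) \le T(f) + 3\lambda + 3\lambda^2 + \lambda^3$; since the shift in mean is only $\lambda(1-\alpha)$, this estimate degenerates as $\alpha \to 1$, and the paper then supplies a separate argument (the inequality $T(f) \ge 3\E(f) - 2$) to secure continuity at the right endpoint. You instead parametrise by the actual mean shift $\e = \alpha_2 - \alpha_1$ and bound each cross term by $\E(H_i) = \E(h) = \e$ rather than by a pointwise bound on $H_i$; this yields the uniform Lipschitz estimate $|m(\alpha_2) - m(\alpha_1)| \le 7\,|\alpha_2 - \alpha_1|$ in one stroke, with no special treatment of $\alpha = 1$ required. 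So your route is marginally cleaner, at the cost of a slightly worse (and, as you note, irrelevant) constant.
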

\begin{proof} $M$ and $m$ are trivially increasing. $m(0) = 0$ so $m'(0) = 0$, which implies $m'$ is increasing by convexity. So we focus on showing that $m$ is uniformly continuous.
Given any piecewise constant function $f: [0,1]^3 \to [0,1]$, we can form a new function $f_\e$ by setting $f_\e = \e + (1-\e)f$. This function will have $\E(f_\e) = \e + (1-\e)\E(f) = \E(f) + \e(1-\E(f))$, and
\begin{align*}
T(f_\e) & = \E(\E(f_\e|X, Y)\E(f_\e|X, Z)\E(f_\e|Y, Z)) \\
& \le \E((\E(f|X, Y)+\e)(\E(f|X, Z)+\e)(\E(f|Y, Z)+\e)) \\
& \le T(f) + 3\e + 3 \e^2 + \e^3.
\end{align*}
The last inequality above is obtained by expanding and using the trivial bound $f \le 1$. By taking $f$ to have $\E(f) = \alpha$ and $T(f)$ approaching $m(\alpha)$, we deduce $m(\alpha) \le m(\alpha + \e(1-\alpha)) \le m(\alpha) + 3\e + 3 \e^2 + \e^3$, which implies continuity of $m$ at all points except from 1. To show continuity at 1, we use the following inequality: 
\[T(f) \ge \E(\E(f|X, Y)+\E(f|X, Z)+\E(f|Y, Z)-2) = 3\E(f)-2\]
so $m(\alpha) \ge 3\alpha-2$, and $m$ is therefore continuous at 1. $m$ is compactly supported and continuous, therefore it is uniformly continuous.
\end{proof}

We will use a probabilistic construction to prove Theorem~\ref{pt1}. We let $\e_1, \e_2, \e_3 > 0$ and assume $N$ is sufficiently large in terms of $\e_1, \e_2$ and $\e_3$. We take a piecewise constant function $f: [0,1]^3 \to [0,1]$ with $E(f)= \alpha$ and  $T(f)= m_1$, where $m_1 < m(\alpha) + \e_1$ (this is possible by the definition of $m$). We then construct our subset as follows: for each $g \in G$, we take independent $U[0,1]$ random variables $X_g, Y_g, Z_g$. We then include each point $(x, y, z) \in P$ in our set $A$ independently with probability $f(X_x, Y_y, Z_z)$. Because $\E(f) = \alpha$, this means our subset $A$ will have expected size $\alpha N^2$.

Now, let $(x+d, y, z), (x, y+d, z), (x, y, z+d)$ be an arbitrary corner (with $d \neq 0$). The probability that this corner is contained in $A$ is 
\[ \E(f(X_{x+d},Y_y, Z_z)f(X_x, Y_{y+d}, Z_z)f(X_x, Y_y, Z_{z+d})) .\]
The random variables $X_{x+d}, Y_{y+d}, Z_{z+d}$ are each independent of all the other random variables appearing here, so the above probability is in fact
\[ \E(\E(f|X_x, Y_y)\E(f|X_x, Z_z)\E(f|Y_y, Z_z))=T(f)=m_1 .\]
We would like to argue that the size of $A$ must, with high probability, be close to its expected value, and similarly for the size of the set of corners of given $d$.

\subsection{Size of $A$}
We say a subset $Q \subset P$ is independent if for each pair of distinct points $ p_1 =(x_1, y_1, z_1), p_2 =(x_2, y_2, z_2)$ in $Q$, $x_1 \neq x_2, y_1 \neq y_2$ and $z_1 \neq  z_2$. This condition is equivalent to the set of events $\{ p \in A | p \in Q\}$ being mutually independent. Now, let $Q$ be an independent set of size at least $N/3$. It is easy to see such a set must exist. Indeed, any maximal independent set has size at least $N/3$. Also note that, for any $p \in P$, $Q+p$ will also be an independent set.

Now, for any $p \in P$ the size of $(Q+p) \cap A$ is the sum of $|Q|\ge N/3$ independent Bernoulli random variables, so it is distributed as the binomial distribution $B(|Q|, \alpha)$. By concentration of measure arguments (see, for example, Hoeffding \cite{hoeffding63}) we know that for binomial distributions $X \sim B(n, p)$ the tail probabilities $\mathbb{P}(|X-np|/n > c)$ decrease exponentially with $n$. Therefore $\mathbb{P}((Q+p) \cap A < (\alpha - \e_2)|Q|) < 1/3N^2$ for $N$ sufficiently large. So, with probability at least $2/3$, $|(Q+p) \cap A|>(\alpha - \e_2)|Q|$ for all $p \in P$. By averaging over $p \in P$, we have $|A|>(\alpha - \e_2)N^2$.
\subsection{Size of $S_d$}
We follow a similar method here, with a couple of modifications. For a given $d \neq 0$, we can consider $S_d$, the set of corners of size $d$, to be a subset of $P_d = \{(x, y, z) \in G^3 | x+y+z+d = 0\}$, by setting $(x, y, z) \in S_d$ if the corner $(x+d, y, z), (x, y+d, z), (x, y, z+d)$ is in $A$. We now define a subset $Q \subset P_d$ to be independent if for each pair of distinct points $ p_1 =(x_1, y_1, z_1), p_2 =(x_2, y_2, z_2)$ in $Q$, $x_1 \notin \{x_2, x_2+d, x_2-d \} $ and similarly for the other coordinates. This condition ensures that the set of events $\{ p \in S_d | p \in Q\}$ are mutually independent. For each $d$, we can take an independent $Q_d \subset P_d$ with $|Q_d| \ge N/9$, and for all $p \in P$, $Q_d+p$ will also be independent.

Since for all $p \in P$ the size of $(Q_d+p) \cap S_d$ is the sum of $|Q_d|\ge N/9$ independent Bernoulli random variables, it is binomially distributed and therefore $\mathbb{P}((Q_d+p) \cap S_d > (m_1 + \e_3)|Q_d|) < 1/3N^3$ for $N$ sufficiently large. So, for each $d$, with probability at least $1-1/3N$, we have $|(Q_d+p) \cap S_d|<(m_1 + \e_3)|Q_d|$ for all $p \in P$. By averaging over $p \in P$, we have $|S_d|<(m_1 + \e_3)N^2$. This happens simultaneously for every $d \in G\setminus \{0\}$ with probability at least $2/3$.

Putting the two estimates together, we see that with probability at least $1/3$, we have that both $|A|>(\alpha - \e_2)N^2$ and $|S_d|<(m_1 + \e_3)N^2<(m(\alpha) + \e_1 + \e_3))N^2$. So, if we set $\alpha' = \alpha - \e_2$, we have $M(\alpha') \le m(\alpha' + \e_2) + \e_1 + \e_3$. By taking $\e_1, \e_2$ and $\e_3$ to be sufficiently small, and using the fact that $m$ is continuous, we get that $M(\alpha') \le m(\alpha')$, as required.
\section{Regularity}\label{regularity}
Throughout this section, we set $G$ to be the group $\mathbb{F}_2^n$.
We define a boxing of $G$ to be the following data:
A subgroup $W \subset \mathbb{F}_2^n$, an integer $m$, and for each coset $V = W^3 + (x, y, z)$ with $(x, y, z) \in P$, partitions of each of $W+x, W+y, W+z$ into $m$ sets: $B_1, B_2, \dots, B_m$ $C_1, C_2, \dots, C_m$ and $D_1, D_2, \dots, D_m$. These cosets $V$ will be refered to as ``outer boxes''. There are $4^{\text{codim}(W)}$ of them, and they form a partition of $P + W^3$.

We will write $(B_i \times C_j)_P$ for the set $\{ (x, y, z) \in P | x \in B_i, y \in C_j \}$. Note that this set can be identified with $B_i \times C_j$ by forgetting the third coordinate. Similarly we write $(B_i \times D_j)_P$ for the set $\{ (x, y, z) \in P | x \in B_i, z \in D_j \}$ and $(C_i \times D_j)_P$ for the set $\{ (x, y, z) \in P | y \in C_i, z \in D_j \}$. We will refer to sets of this type as ``inner boxes''. For each outer box $V$, these inner boxes give us three partitions of the set $P \cap V$.

We will need some Fourier analysis. Given an abelian group $W$, let $W^* = \hom (W, S^1)$ be the dual group and $(g)^\wedge$ and $\hat{g}$ both denote the Fourier transform of a function $g$, defined as $\hat{g}(r) = \E_{x \in W} g(x)r(x) $ for $r \in W^*$. When $S$ is a subset of the group $W$, we will also use $S$ to denote the indicator function of the set $S$. If $B_i \subset W+x$, we define Fourier coefficients for the set $B_i$ to be the values of $\widehat{B_i - x}$ (and the same for $C_i$ and $D_i$). The size of these Fourier coefficients is independent of the choice of $x$. 

We will also need the notion of quasirandomness from graph theory. This is the notion used in Szemer\' edi's graph regularity lemma \cite{szemeredi76}. By identifying $(B_i \times C_j)_P$ with $B_i \times C_j$, $A$ restricted to the inner box gives us a bipartite graph between $B_i$ and $C_j$. We say that $A$ is $\e$-quasirandom if for any $B' \subset B_i$ and $C' \subset C_i$ with $|B'| \ge \e|B_i|$ and $|C'| \ge \e|C_i|$, the density of $A$ on $B' \times C'$ differs from the density of $A$ on $B_i \times C_j$ by at most $\e$.

A boxing will be called regular if it satisfies the following two regularity conditions:
\begin{itemize}
\item [(1)] For all but a proportion of at most $\e$ of the outer boxes $V$, all of the associated sets $B_i, C_j, D_k$ are $\e/m^3$ uniform, in the sense that all of their non-trivial Fourier coefficients are bounded in size by $\e/m^3$.
\item [(2)] For all but a proportion of at most $\e$ of the outer boxes $V$, if we choose $(x, y, z) \in P \cap V$ uniformly at random, then for each of the three inner boxes containing $(x, y, z)$ (i.e. $(B_i \times C_j)_P$, $(B_i \times D_k)_P$, and $(C_j \times D_k)_P$ for some $i, j, k$), the probability that the set $A$ is $\e$-quasirandom in the inner box is at least $1-\e$.
\end{itemize}
Given a regular boxing, we will call an outer box $V$ regular if it is not one of the exceptional boxes for (1) or (2).

The main result of this section is:

\begin{theorem}\label{regularitylemma} [Regularity Lemma]
Given $\e>0$, there exists an $n_1$ and $m_1$ such that if $n \ge n_1$, and $A \subset G = \mathbb{F}_2^n$, then there exists a regular boxing where $W$ has codimension at most $n_1$ and the parameter $m$ is at most $m_1$.
\end{theorem}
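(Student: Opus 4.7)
My plan is to prove Theorem~\ref{regularitylemma} by an energy-increment argument that hybridises Green's arithmetic regularity lemma (for condition (1)) with Szemer\'edi's graph regularity lemma (for condition (2)). Start from the trivial boxing $W = G$, $m = 1$, with $B_1 = W + x$, $C_1 = W + y$, $D_1 = W + z$ in each outer box. At each step, either the current boxing is already regular (and we stop), or at least one of (1), (2) fails on more than an $\e$-fraction of outer boxes, and we use that failure to refine.

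Define a potential $\Phi$ as the sum of two non-negative quantities, both averaged over outer boxes. The \emph{combinatorial} part is the Szemer\'edi mean-square density energy: for each outer box $V$ and each of the three inner-box partitions of $P \cap V$, sum $|B_i||C_j| |V|^{-2} d_A(B_i, C_j)^2$ and the analogous terms for $(B_i \times D_k)_P$ and $(C_j \times D_k)_P$. The \emph{arithmetic} part is the total $L^2$-mass carried by the indicator functions of the partition pieces on non-trivial characters in $W^*$. Both are bounded above by absolute constants. If (1) fails on an $\e$-fraction of outer boxes, then in each such box some partition piece $B_i \subset W + x$ (or $C_j$, $D_k$) admits a character $\chi \in W^*$ with $|\widehat{B_i - x}(\chi)| > \e/m^3$; replacing $W$ by $W \cap \ker \chi$ (an index-$2$ subgroup, since $G = \mathbb{F}_2^n$) and refining the partitions accordingly increases the arithmetic part of $\Phi$ by a definite amount polynomial in $\e/m^3$, by Plancherel. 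If (2) fails on an $\e$-fraction of outer boxes, then in each such box there are witnesses $B' \subset B_i$, $C' \subset C_j$ of non-quasirandomness, and splitting $B_i$ along $B'$ and $C_j$ along $C'$ increases the combinatorial part of $\Phi$ by a definite amount polynomial in $\e$, by the standard bipartite regularity calculation.

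Any refinement (of either type) refines both outer-box and inner-box structure, so neither component of $\Phi$ ever decreases; moreover at least one component strictly increases by $\eta(\e, m) > 0$ at each step. Since $\Phi$ is bounded above, the iteration terminates in at most $O(1/\eta)$ steps. Book-keeping the parameter growth: each (1)-step adds $1$ to $\mathrm{codim}(W)$ and at most doubles $m$, and each (2)-step leaves $W$ fixed and at most quadruples $m$. Unwinding the recursion yields tower-type bounds $n_1 = n_1(\e)$ and $m_1 = m_1(\e)$ depending only on $\e$, which is exactly what the statement asserts.

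The principal obstacle is the feedback between the two conditions: the threshold $\e/m^3$ in (1) tightens as $m$ grows through repeated (2)-refinements, so fixing (2) may repeatedly re-break (1). Using a \emph{single} combined potential $\Phi$, rather than two independent ones, precisely handles this: every step of either type produces a definite global increment of $\Phi$, so the process cannot cycle indefinitely regardless of the order in which (1)-steps and (2)-steps are interleaved. One also verifies that shrinking $W$ via an arithmetic step only refines the outer boxes (new $W'$-cosets are contained in old $W$-cosets), so the combinatorial component of $\Phi$ is monotone non-decreasing under (1)-refinements too; this monotonicity is what legitimises the combined-potential argument.
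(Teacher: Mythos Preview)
Your overall plan---an energy-increment argument interleaving an arithmetic refinement (for condition (1)) with a graph-regularity refinement (for condition (2))---is exactly the paper's strategy. But your execution of the termination argument has a genuine gap, and it is precisely at the point you flag as the principal obstacle.

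First, your single combined potential $\Phi$ is not monotone under both refinement types. Your ``arithmetic part'' is the $L^2$-mass of the partition pieces on non-trivial characters of $W^*$; by Parseval this equals $\sum_i(\delta(B_i)-\delta(B_i)^2)=1-\sum_i\delta(B_i)^2$. Under an (A)-refinement (passing to $W'=W\cap\ker\chi$), each $\delta(B_i)^2$ is replaced by $\tfrac12(\delta(B_i')^2+\delta(B_i'')^2)\ge\delta(B_i)^2$, so your arithmetic part \emph{decreases}, not increases. The paper uses the complementary energy $E_1=\sum_i\delta(B_i)^2$, which does increase under (A)-refinement; but then $E_1$ can \emph{decrease} under a (B)-refinement (splitting a piece $B_i$ inside the same coset replaces $\delta(B_i)^2$ by $\delta(B')^2+\delta(B_i\setminus B')^2\le\delta(B_i)^2$). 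So neither choice makes both halves of $\Phi$ simultaneously monotone, and ``every step produces a definite global increment of $\Phi$'' is not justified.

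Second, even granting monotonicity, ``terminates in $O(1/\eta)$ steps'' with $\eta=\eta(\e,m)$ does not give termination, because $m$ grows along the iteration and $\eta$ may shrink to zero. The paper resolves the feedback problem not with a combined potential but by keeping the two energies separate and exploiting two structural facts you miss: (i) the (B)-increment to $E_2$ is at least $\e^6/3$, \emph{independent of $m$}, which caps the total number of (B)-steps by a function of $\e$ alone; and (ii) an (A)-refinement \emph{does not change $m$ at all} (the pieces $B_i$ are merely restricted to the new cosets, one per coset, so the number of pieces per outer box is preserved)---contrary to your claim that it ``at most doubles $m$''. Hence during any run of consecutive (A)-steps $m$ is frozen, the (A)-increment $\e^3/(12m^6)$ to $E_1$ is a fixed positive number, and that run terminates. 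This two-level bookkeeping, not a single potential, is what makes the iteration halt.
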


In the proof that follows, we will not obtain any effective bounds on $n_1$ or $m_1$. The method of proof goes via an energy increment argument, similar to that used in the proof of Green's regularity lemma \cite{ARLoriginal}. However, since we have two different regularity conditions to satisfy, we will be using two energies, one associated with each type of regularity. We define these energies now.

Let $\delta(B_i) := |B_i|/|W|$ be the density of $B_i$ inside $W+x$ and define $\delta(C_i), \delta(D_i)$ similarly. Let $\delta_{B_i, C_j} := |A \cap (B_i \times C_j)_P|/|B_i||C_j|$ be the density of $A$ in the inner box $(B_i \times C_j)_P$, and define $\delta_{B_i, D_j}, \delta_{C_i, D_j}$ similarly.
\[ E_1 = \frac{1}{4^{\text{codim}(W)}}\sum_{\text{cosets} V}\sum_{i=1}^{m}\frac{(\delta(B_i)^2+\delta(C_i)^2+\delta(D_i)^2)}{3} \]

\[ E_2 = \frac{1}{4^{\text{codim}(W)}}\sum_{\text{cosets} V}\sum_{i,j=1}^{m}\frac{1}{3}\big(\delta(B_i)\delta(C_j)\delta^2_{B_i, C_j}+ \]
\[\delta(B_i)\delta(D_j)\delta^2_{B_i, D_j}+\delta(C_i)\delta(D_j)\delta^2_{C_i, D_j}\big) \]

Note that $E_1$ and $E_2$ take values in $[0,1]$, so they cannot be incremented indefinitely.

We will show that:

\begin{itemize}
\item [(A)] If regularity condition (1) fails to hold, we may refine the boxing so that $E_1$ is incremented by some amount depending on $\e$ and $m$ only, without increasing $m$ or decreasing $E_2$.

\item [(B)] If regularity condition (2) fails to hold, we may refine the boxing so that $E_2$ is incremented by an amount depending on $\e$ only.
\end{itemize}

These two statements together imply that, starting with any boxing, we may refine the boxing until we get a regular boxing. Indeed, we have the following algorithm:

\begin{itemize}
\item[i.] If (2) doesn't hold, then apply (B) until it does hold.
\item[ii.] If (2) holds but (1) doesn't, apply (A) until (2) does hold. Then, if (1) doesn't hold, go back to i. Otherwise, both conditions hold and so we are done.
\end{itemize}

This must terminate because the number of times $E_2$ can be incremented in total is bounded, and the number of times $E_1$ can be incremented in a row is also bounded (with the bound depending on the parameter $m$).

\subsection{Refinement (A)}

We assume regularity condition (1) fails to hold. Suppose that $V$ is an outer box, and that some $B_i, C_i$ or $D_i$ associated with $V$ is not $\e/m^3$-uniform. This will happen to a proportion of at least $\e$ of the outer boxes. Without loss of generality, we may assume $B_1$ is non-uniform.

$B_1 \subset W + x$, so $B_1 - x \subset W$. The non-uniformity means that we can find some $\xi \in W^* \setminus \{0\}$ such that $|\widehat{(B_1 - x)}(\xi)| \ge \e/m^3$. Set $W'$ to be the kernel of $\xi$. This will be a codimension-1 subspace of $W$, and so we can write $W = W' \cup (W' + v)$ for some $v$.

Each of the $B_i$ can be decomposed as $B_i = B'_i \cup B''_i$ where $B'_i = B_i \cap (W'+x)$ and $B''_i = B_i \cap (W' + v +x)$ (and similarly for $C_i$ and $D_i$). This gives us a way of refining our boxing by replacing the original subspace $W$ by a codimension 1 subspace $W'$. For each of the four cosets of $W'^3$ in the original coset $V$, we get the three new partitions by restricting the old partitions to the relevant coset of $W'$, so the new $B_1, \dots, B_m$ associated to a coset of $W'^3$ will either be $ B'_1, \dots, B'_m$ or $B''_1, \dots, B''_m$. Now we inspect what happens to $E_1$. If we let $\delta(B'_i) = |B'_i|/|W'|$ and $\delta(B''_i) = |B''_i|/|W'|$, then each term $\delta(B_i)^2$ will be replaced by

\begin{align*} \frac{1}{4}(2\delta(B'_i)^2+2\delta(B''_i)^2) & = \left( \frac{\delta(B'_i) + \delta(B''_i)}{2}\right)^2 + \left( \frac{\delta(B'_i) - \delta(B''_i)}{2}\right)^2 \\
& = 
 \delta(B_i)^2 + \left( \frac{\delta(B'_i) - \delta(B''_i)}{2}\right)^2 ,
\end{align*}
and similarly for $\delta(C_i)^2$ and $\delta(D_i)^2$. 

Since we haven't yet used any special properties of $W'$, this shows that $E_1$ does not decrease when we refine the boxing by replacing $W$ with any codimension~1 subspace. By iterating, this means that $E_1$ will not decrease when we replace $W$ by any subspace and restrict the sets $B_i, C_j$ and $D_k$ in the natural way to get a boxing.
Now, $|\widehat{(B_1 - x)}(\xi)| \ge \e/m^3$ means that $|\delta(B'_1) - \delta(B''_1)| \ge \e/m^3$, so replacing $W$ with $W'$ will increment $E_1$ by at least

\[ \frac{1}{4^{\text{codim}(W)}} \frac{\e^2}{12m^6} .\]

Note that this increment can also be achieved by replacing $W$ with any subspace of $W'$. This increment still depends on $\text{codim}(W)$, so it is not enough on its own. However, we can apply the same method simultaneously for each outer box $V$ which does not satisfy the first regularity condition. There are at least $\e 4^{\text{codim}(W)}$ such outer boxes. For each outer box $V$ which fails (1), we get a codimension~1 subspace $U_V$ of $W$. We set $T = \cap_V U_V$. The refinement we will take is the refinement obtained by replacing $W$ with $T$ and restricting the sets $B_i, C_j$ and $D_k$ in the natural way to get a boxing. Since $T$ is a subspace of $U_V$ for all outer boxes $V$ which fail (1), $E_1$ will be incremented by at least
\[\frac{\e^3}{12m^6} ,\]
which depends only on $m$ and $\e$, as required. Also note that the parameter $m$ does not change when we apply this refinement. However, we still need to check that $E_2$ has not been decreased.

To this end, the following lemma comes in useful.
\begin{lemma}\label{incrementformula}
Suppose we are given a set $S$, a subset $M \subset S$ with $|M| = \mu |S|$, and a partition $S_1, S_2, \dots, S_t$ of $S$. Then
\[ \sum_{i=1}^t \frac{|S_i|}{|S|} \left( \frac{|M \cap S_i|}{|S_i|} \right)^2 \ge \mu^2 .\]
If moreover there exists an $i$ with $|S_i| \ge c_1 |S|$, $\left| |M \cap S_i|/|S_i| - \mu \right| \ge c_2$, then
\[ \sum_{i=1}^t \frac{|S_i|}{|S|} \left( \frac{|M \cap S_i|}{|S_i|} \right)^2 \ge \mu^2 + c_1c_2^2 \]
\end{lemma}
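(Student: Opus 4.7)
My plan is to recognize this as essentially the non-negativity of variance for a random variable distributed according to the partition. Introduce the shorthand $p_i = |S_i|/|S|$ and $\mu_i = |M \cap S_i|/|S_i|$, so that the $p_i$ are non-negative and sum to $1$, and the $\mu_i$ lie in $[0,1]$. Because $\{S_1, \dots, S_t\}$ partitions $S$, we have
\[ \sum_{i=1}^t p_i \mu_i = \frac{1}{|S|} \sum_{i=1}^t |M \cap S_i| = \frac{|M|}{|S|} = \mu. \]

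With these identities, the first inequality is just a one-line variance computation: expanding the square,
\[ \sum_{i=1}^t p_i (\mu_i - \mu)^2 = \sum_{i=1}^t p_i \mu_i^2 - 2\mu \sum_{i=1}^t p_i \mu_i + \mu^2 \sum_{i=1}^t p_i = \sum_{i=1}^t p_i \mu_i^2 - \mu^2. \]
The left-hand side is a sum of non-negative terms, which gives $\sum_i p_i \mu_i^2 \ge \mu^2$. (Equivalently, this is Cauchy--Schwarz applied to $\sqrt{p_i}$ and $\sqrt{p_i}\mu_i$, or Jensen's inequality applied to $x \mapsto x^2$.)

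For the strengthened statement, use the same identity but drop all terms in $\sum_i p_i (\mu_i - \mu)^2$ except the index $i^*$ for which we are given $p_{i^*} \ge c_1$ and $|\mu_{i^*} - \mu| \ge c_2$: this yields
\[ \sum_{i=1}^t p_i \mu_i^2 - \mu^2 = \sum_{i=1}^t p_i (\mu_i - \mu)^2 \ge p_{i^*} (\mu_{i^*} - \mu)^2 \ge c_1 c_2^2, \]
which is the desired bound. There is no real obstacle here; the lemma is purely an elementary variance estimate, packaged in a form convenient for reading off energy increments in the subsequent regularity argument.
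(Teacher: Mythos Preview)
Your proof is correct and essentially identical to the paper's: the paper phrases the same computation probabilistically by defining a random variable $X$ equal to $\mu_i$ with probability $p_i$ and invoking $\operatorname{Var}(X)\ge 0$ (and then $\operatorname{Var}(X)\ge c_2^2\,\mathbb{P}(|X-\mu|\ge c_2)\ge c_1c_2^2$), which is exactly your expansion $\sum_i p_i(\mu_i-\mu)^2 = \sum_i p_i\mu_i^2 - \mu^2$ followed by dropping all but the $i^*$ term.
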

\begin{proof}
Let $X$ be a random variable defined as follows: take $p \in S$ uniformly at random, and set $X = |M \cap S_i|/|S_i|$ where $i$ is such that $S_i \ni p$. Then we have
\[ \E(X) = \sum_{i=1}^t \frac{|S_i|}{|S|} \frac{|M \cap S_i|}{|S_i|} = \sum_{i=1}^t \frac{|M \cap S_i|}{|S|} = \mu \]
and
\[ \E(X^2) = \sum_{i=1}^t \frac{|S_i|}{|S|} \left( \frac{|M \cap S_i|}{|S_i|} \right)^2 .\]
So
\[ \sum_{i=1}^t \frac{|S_i|}{|S|} \left( \frac{|M \cap S_i|}{|S_i|} \right)^2 = \mu^2 + \operatorname{Var}(X)  \]
The first part follows from non-negativity of variance. In the second part, we may assume $|S_1| \ge c_1 |S|$, $||A \cap S_1|/|S_1| - \mu| \ge c_2$. We have 
\[ \operatorname{Var}(X) \ge c_2^2 \mathbb{P}( |X-\E(X)|\ge c_2) \ge c_2^2 \mathbb{P}(p \in S_1) \ge c_1c_2^2, \]
which implies the result.
\end{proof}

Now let us look at $E_2$. $E_2$ is simply the weighted sum of the square of the density of $A$ in each inner box. One can check that in the refinement described above, each old inner box is replaced by a family of new inner boxes which partition it. So, by setting $S$ to be the old inner box, $M$ to be $A \cap S$ and $S_1, \dots S_t$ to be the relevant new inner boxes in Lemma~\ref{incrementformula}, we can see that $E_2$ is not decreased in the process.

\subsection{Refinement (B)}

If regularity condition (2) fails to hold, we will use a different kind of refinement. The subspace $W$ and the outer boxes will stay the same, but we will refine the three partitions $B_1, \dots, B_m$, $C_1, \dots C_m$ and $D_1, \dots D_m$. The first part of Lemma~\ref{incrementformula} tells us that $E_2$ will not decrease when we do this. Let $V$ be an outer box which does not satisfy (2), and let $(B_i \times C_j)_P$ be an inner box which is not $\e$-quasirandom. Then we can find $B' \subset B_i$, $C' \subset C_j$ with $|B'| \ge \e |B_i|$, $|C'| \ge \e|C_j|$ with $|\delta_{B' \times C'}-\delta_{B_i \times C_j}| \ge \e$. So if we refine our partitions by replacing $B_i$ with $B'$ and $B_i \setminus B'$ and replacing $C_j$ with $C'$ and $C_j \setminus C'$, we can apply the second part of Lemma~\ref{incrementformula} using

\[ S = (B_i \times C_j)_P, M = A \cap S,\]\[S_1 = (B' \times C')_P, S_2 = (B' \times C_j \setminus C')_P, S_3 = (B_i \setminus B' \times C')_P, S_4 = (B_i\setminus B' \times C_j\setminus C' )_P ,\]

Since $|S_1 \ge \e^2 |S|$ and $\left||M \cap S_1|/|S_1|  - \mu \right| \ge \e $, $E_2$ will be incremented by at least
\[ \frac{1}{4^{\text{codim}(W)}} \frac{\delta(B_i) \delta(C_j) \e^4}{3} .\]

We can apply the same process simultaneously to all of the inner boxes inside $V$ which are not $\e$-quasirandom (note that such boxes make up a proportion of at least $\e$ of $V$'s inner boxes), and also do the same for each outer box $V$ which fails (2) (of which there are at least $\e 4^{\text{codim}(W)}$). This gives us a total increment in $E_2$ of at least $\e^6/3$. Note that, if we wish to ensure that the parameter $m$ is the same throughout the boxing, we may simply add empty sets to some of the partitions until they all have equal size.

We can now deduce that a regular boxing exists. To get the quantitative result of Theorem~\ref{regularitylemma}, we note that in each of the refinements, the parameters $\text{codim}(W)$ and $m$ increase in a controlled way.

\subsection{Counting}

Now that we have proved Theorem~\ref{regularitylemma}, we will use it to count corners. Let $m'(\alpha)$ be a convex function with $m'(\alpha) \le m(\alpha)$ for all $\alpha \in [0,1]$. Let $G = \mathbb{F}_2^n$ be sufficiently large so that we can apply the regularity lemma (Theorem~\ref{regularitylemma}) with $\e$. Suppose $A \subset P$ has size $|A| \ge \alpha N^2$. We take an $\e$-regular boxing for $A$ (this gives us the data $W \le G$, together with three partitions $B_1, \dots, B_m$, $C_1, \dots, C_m$ and $D_1, \dots, D_m$ for every outer box $V = W^3 + p$ with $p \in P$). Throughout what follows, we will use $o(1)$ to denote some quantities that tend to 0  as $\e \to 0$. The main result of this section is the following:

\begin{claim}\label{countclaim}
The number of corners lying in $A$ with $d \in W$ is at least $(m'(\alpha) + o(1))N^2|W|$. 
\end{claim}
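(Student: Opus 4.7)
The key observation is that when $d \in W$, the three corner vertices $(x+d, y, z)$, $(x, y+d, z)$, $(x, y, z+d)$ all lie in the same outer box as the centre $(x, y, z)$. So we decompose the total count as $\sum_V N_V$, where $N_V$ counts corners with $d \in W$ having all three vertices in $A \cap V$. Setting $\alpha_V := |A \cap V \cap P|/|W|^2$, the average of $\alpha_V$ over the $N^2/|W|^2$ outer boxes equals $\alpha$. The plan is to show $N_V \ge (m'(\alpha_V) - o(1))|W|^3$ for each regular outer box $V$, and then combine via Jensen's inequality.

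Fix a regular outer box $V = W^3 + p$ with $p \in P$. Each corner is parameterised by its centre $(x, y, z) \in V$ (with $d = -(x+y+z) \in W$ determined), and each of its three vertices has one coordinate determined by the others via the $P$-constraint. Viewing $A \cap V$ via its three bipartite projections onto the $(x,y)$, $(x,z)$ and $(y,z)$ coordinate pairs, $N_V$ is precisely the number of triangles in a tripartite graph on $(W+p_1) \sqcup (W+p_2) \sqcup (W+p_3)$, whose bipartite density between $B_i$ and $C_j$ is exactly $\delta_{B_i, C_j}$ (and similarly for the other two pairings). Partitioning the vertex classes by the $B_i, C_j, D_k$, using condition~(1) to control the sizes $|(B_i \times C_j \times D_k) \cap P|$, and applying a standard tripartite counting lemma inside each ``good'' inner-box triple via condition~(2), we obtain
\[ N_V = |W|^3 \, \E\!\left[\delta_{B_{X'}, C_{Y'}} \delta_{B_{X'}, D_{Z'}} \delta_{C_{Y'}, D_{Z'}}\right] + o(|W|^3), \]
where $X', Y', Z'$ are independent $[m]$-valued random variables with $\mathbb{P}(X'=i) = \delta(B_i)$ (and analogously for $Y', Z'$).

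To interpret the right-hand side as $T(f)$ for some function $f$, let $f_V(i,j,k)$ denote the density of $A$ on the three-dimensional inner box $(B_i \times C_j \times D_k) \cap P$ (extended arbitrarily to $[0,1]$ on the small set of indices where this intersection is degenerate). Fourier uniformity guarantees $\E[f_V(X',Y',Z')] = \alpha_V + o(1)$ and that the three pairwise conditional expectations of $f_V$ approximate $\delta_{B_{X'}, C_{Y'}}$, $\delta_{B_{X'}, D_{Z'}}$, $\delta_{C_{Y'}, D_{Z'}}$ up to $o(1)$. Lemma~\ref{discrete} then converts $f_V$ to a piecewise constant function on $[0,1]^3$ with the same $T$-value and mean, and the definition of $m$ together with the uniform continuity from Lemma~\ref{continuity} give $T(f_V) \ge m(\alpha_V) - o(1)$. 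Since $T(f_V)$ and $N_V/|W|^3$ both agree with $\E\!\left[\delta_{B_{X'}, C_{Y'}} \delta_{B_{X'}, D_{Z'}} \delta_{C_{Y'}, D_{Z'}}\right]$ up to $o(1)$, and $m' \le m$, this yields $N_V \ge (m'(\alpha_V) - o(1))|W|^3$.

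Summing over regular outer boxes (non-regular ones contribute $o(N^2|W|)$) and applying Jensen's inequality to the convex function $m'$ with average input $\alpha$, we obtain $\sum_V N_V \ge (m'(\alpha) - o(1))N^2|W|$. The $\alpha N^2$ contribution from trivial ($d = 0$) corners is $o(N^2|W|)$ for $n$ sufficiently large, yielding the claim. The principal technical obstacle lies in the third paragraph: verifying that both regularity conditions combine to make the conditional expectations of $f_V$ match the pairwise densities up to $o(1)$, which is where the full strength of Theorem~\ref{regularitylemma} is genuinely used.
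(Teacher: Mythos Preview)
Your proposal is correct and follows essentially the same route as the paper: decompose by outer box, use the triangle counting lemma inside each regular box to express the corner count as $T$ of a function built from the three-dimensional box densities, invoke Fourier uniformity (condition~(1)) to control that function, and then average via convexity of $m'$. The only cosmetic difference is that the paper defines $f' = |A \cap B_i \times C_j \times D_k|\,|W|/(|B_i||C_j||D_k|)$ (whose pairwise conditional expectations are \emph{exactly} the $\delta$'s but which may exceed $1$, so must be truncated) whereas you take the genuine density on $(B_i \times C_j \times D_k)\cap P$ (automatically in $[0,1]$, but whose conditional expectations only match the $\delta$'s up to an $L^1$ error controlled by the same Fourier bound); both choices lead to the same estimate.
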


Note that for a corner, the condition $d\in W$ is equivalent to requiring the three vertices of the corner to be in the same outer box. By the pigeonhole principle, it follows from  the above claim that there is some $d \in W \setminus \{0\}$ with $|S_d| \ge (m'(\alpha) + o(1) + O(|W|^{-1}))N^2$. Note that, for a fixed $\e$, $|W|$ tends to infinity with $N$, so the $O(|W|^{-1})$ term can be made as small as we like. Therefore $M(\alpha) \ge m'(\alpha)$, which is Theorem~\ref{pt2}.

Now we set about proving Claim~\ref{countclaim}. For each outer box $V$, we define $\alpha(V)$ to be the density of $A$ inside $V$, $\alpha(V) = |V \cap A|/|V \cap P|$. The average value of $\alpha(V)$ over all $V$ is at least $\alpha$. We will show that if $V$ is a regular outer box, then the number of corners in $V$ is at least $(m(\alpha(V) + O(\e)) + o(1))|W|^3$ (this is the content of Claim~\ref{1box} below). Since $m(\alpha)$ is uniformly continuous, $(m(\alpha(V) + O(\e)) + o(1))|W|^3 = (m(\alpha(V))+ o(1))|W|^3 \ge (m'(\alpha(V))+ o(1))|W|^3$. Then, we can use the convexity of $m'$ to take the sum over all $V$, and so the number of corners with $d \in W$ is at least $ (m'(\alpha) + o(1))N^2|W|$ . Note that here we use the fact that the proportion of irregular boxes is $o(1)$, so the error that comes from those boxes is $o(1)$.

So, it is sufficient to prove the following claim:

\begin{claim}\label{1box}
If $V$ is a regular outer box, then there are at least $(T(f) + o(1))|W|^3$ corners in $V$, where $f$ is some function of independent random variables $X, Y$ and $Z$, where $X, Y$ and $Z$ each take finitely many values, $f$ takes values in $[0,1]$ and $\E(f) = \alpha(V) + O(\e)$.
\end{claim}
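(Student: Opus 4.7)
The plan is to construct independent $[m]$-valued random variables $X, Y, Z$ and a $[0,1]$-valued function $f(X, Y, Z)$ so that both $\E(f) = \alpha(V) + o(1)$ and the corner count in $V$ equals $(T(f) + o(1))|W|^3$. Fix $(x_0, y_0, z_0) \in P$ with $V = W^3 + (x_0, y_0, z_0)$. Take $X, Y, Z$ independent with $\mathbb{P}(X = i) = \delta(B_i)$, $\mathbb{P}(Y = j) = \delta(C_j)$, $\mathbb{P}(Z = k) = \delta(D_k)$, and define
\[ f(i, j, k) = \frac{|A \cap (B_i \times C_j \times D_k) \cap P|}{|(B_i \times C_j \times D_k) \cap P|} \]
when the denominator is positive (and $0$ otherwise), so $f \in [0,1]$.

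By regularity condition (1), all the $B_i, C_j, D_k$ are $\e/m^3$-Fourier uniform, and a standard Fourier computation then gives $|(B_i \times C_j \times D_k) \cap P| = (1 + o(1)) |W|^2 \delta(B_i) \delta(C_j) \delta(D_k)$ for triples where none of the densities falls below a threshold $\delta_0 = \delta_0(\e)$. Summing $\delta(D_k) f(i, j, k)$ over $k$ then telescopes to $|A \cap (B_i \times C_j)_P|/(|B_i||C_j|) = \delta_{B_i, C_j}$ up to $o(1)$, with symmetric formulas for $\E(f \mid X, Z)$ and $\E(f \mid Y, Z)$. Summing $f$ against the product $\delta(B_i)\delta(C_j)\delta(D_k)$ yields $\E(f) = |A \cap V|/|W|^2 + o(1) = \alpha(V) + o(1)$. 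In particular,
\[ T(f) = \sum_{i, j, k} \delta(B_i) \delta(C_j) \delta(D_k) \, \delta_{B_i, C_j} \, \delta_{B_i, D_k} \, \delta_{C_j, D_k} + o(1). \]

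To count corners, I parameterize each corner with $d \in W$ lying in $V$ by $(x, y, z) \in (W+x_0) \times (W+y_0) \times (W+z_0)$, taking $d = x + y + z \in W$. The three vertices $(y+z, y, z)$, $(x, x+z, z)$, $(x, y, x+y)$ each depend on only two of the three coordinates, so the number of corners in $A$ is
\[ \sum_{x, y, z} g_1(y, z)\, g_2(x, z)\, g_3(x, y), \]
where $g_1(y, z) := \mathbb{1}_A(y+z, y, z)$, and $g_2, g_3$ analogously. The crucial observation is that under the bijection $(y, z) \leftrightarrow (y+z, y, z)$ from $C_j \times D_k$ to $(C_j \times D_k)_P$, the restriction of $g_1$ to $C_j \times D_k$ coincides with the indicator of $A$ on the inner box $(C_j \times D_k)_P$, and analogously for $g_2, g_3$. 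Partitioning the sum by the inner-box indices $(i, j, k)$ of $x, y, z$ and applying the triangle counting lemma for quasirandom tripartite graphs -- which by condition (2) applies for most $(i,j,k)$ -- yields on such triples
\[ \sum_{x \in B_i,\, y \in C_j,\, z \in D_k} g_1(y, z)\, g_2(x, z)\, g_3(x, y) = (1 + o(1)) |B_i||C_j||D_k|\, \delta_{B_i, C_j}\, \delta_{B_i, D_k}\, \delta_{C_j, D_k}. \]
Summing over $(i, j, k)$ and combining with the formula for $T(f)$ above gives a total count of $(T(f) + o(1))|W|^3$.

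The hardest part will be combining the three sources of error -- the Fourier uniformity error $\e/m^3$, the quasirandomness error $\e$ in the triangle counting lemma, and the at most $\e$ proportion of exceptional inner boxes coming from condition (2) -- into a single $o(1)$ bound as $\e \to 0$. I plan to handle this by truncating off triples $(i,j,k)$ in which any of $\delta(B_i), \delta(C_j), \delta(D_k)$ or $\delta_{B_i, C_j}, \delta_{B_i, D_k}, \delta_{C_j, D_k}$ falls below $\delta_0(\e)$, whose weighted contribution to both $T(f)$ and to the corner sum is $o(1)$, and by bounding the contribution of non-quasirandom inner boxes trivially via the $\e$-proportion bound.
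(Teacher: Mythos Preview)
Your proposal follows the same overall architecture as the paper: parameterize corners in $V$ by triples $(x,y,z)\in V$, partition by the box indices $(i,j,k)$, apply the triangle counting lemma on quasirandom inner boxes to get the main term $\sum_{i,j,k}|B_i||C_j||D_k|\,\delta_{B_i,C_j}\delta_{B_i,D_k}\delta_{C_j,D_k}$, and then build an $f$ with $T(f)$ matching this sum. The one substantive difference is your choice of normalization. You set
\[
f(i,j,k)=\frac{|A\cap(B_i\times C_j\times D_k)\cap P|}{|(B_i\times C_j\times D_k)\cap P|},
\]
which lands automatically in $[0,1]$, but then $\E(f)$, $\E(f\mid X,Y)$ and $T(f)$ agree with $\alpha(V)$, $\delta_{B_i,C_j}$ and the triangle sum only approximately, and controlling those approximations forces you into the truncation scheme you describe (since the Fourier error $\e/m^3$ in the denominator is small only \emph{relative} to $\delta(B_i)\delta(C_j)\delta(D_k)$). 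The paper instead normalizes by the \emph{expected} size of the triple intersection,
\[
f'(i,j,k)=\frac{|A\cap(B_i\times C_j\times D_k)\cap P|\,|W|}{|B_i||C_j||D_k|},
\]
which makes $\E(f')=\alpha(V)$ and $\E(f'\mid X,Y)=\delta_{B_i,C_j}$ hold \emph{exactly}, with no Fourier input; so $T(f')$ equals the triangle sum on the nose. The price is that $f'$ may exceed $1$, so the paper sets $f=\min(f',1)$ and uses Fourier uniformity just once to bound $\E(f'-f)\le\sum_{i,j,k}\e/m^3=\e$, whence $|\E(f)-\alpha(V)|\le\e$ and $|T(f')-T(f)|\le 3\e$ with no truncation or case analysis. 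Your route works, but the paper's normalization buys a markedly cleaner error budget.

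Two small corrections: the triangle counting lemma gives an \emph{additive} error, i.e.\ count $=(\delta_{B_i,C_j}\delta_{B_i,D_k}\delta_{C_j,D_k}+o(1))|B_i||C_j||D_k|$, not the multiplicative $(1+o(1))$ you wrote; and there is no need to truncate on the inner-box densities $\delta_{B_i,C_j}$, only (in your version) on the part densities $\delta(B_i),\delta(C_j),\delta(D_k)$.
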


By Lemma~\ref{discrete}, in the above claim, we may equivalently have assumed $f$ was piecewise constant $:[0,1]^3 \to [0,1]$ and $X, Y$ and $Z$ where $U[0,1]$.

\begin{proof}[Proof of Claim \ref{1box}]
Let $V$ be a regular outer box. Without loss of generality, we may assume $V = W^3$.
The corners whose vertices lie in $V \cap P$ are in bijection with the elements of $V$ as follows: $(x, y, z)$ correponds to the corner $K(x, y, z) := \{(x+d, y, z), (x, y+d, z), (x, y, z+d) \}$ where $d = -(x+y+z)$. For each $1 \le i, j, k \le m$, there are $|B_i||C_j||D_k|$ corners $K(x, y, z)$ with $(x, y, z) \in B_i \times C_j \times D_k$. The three vertices of these corners will lie in the inner boxes $(C_j \times D_k)_P, (B_i \times D_k)_P$ and $(B_i \times C_j)_P$ respectively. We may form a tripartite graph with vertex sets $B_i, C_j$ and $D_k$, by adding an edge between $x \in B_i$ and $y \in C_j$ if $(x, y, -x-y) \in A$, and similarly for the other two pairs. Triangles in this graph correspond to corners lying in $A$. Now, if the three inner boxes $(B_i \times C_j)_P, (B_i \times D_k)_P$ and $(C_j \times D_k)_P$ are all $\e$-quasirandom, then we may apply the counting lemma from the proof of the triangle removal lemma to conclude that the number of such triangles is $|B_i||C_j||D_k|(\delta_{B_i, C_j}\delta_{B_i, D_k}\delta_{C_j, D_k} + o(1))$ (The statement of this counting lemma is Proposition~1.2 in \cite{ARLoriginal}. The triangle removal lemma, or rather a weaker form of it, was first proved by Ruzsa and Szemer\'{e}di in 1976 \cite{ruzsaszemeredi78}). Without quasirandomness, we still know the number of corners is at most $|B_i||C_j||D_k|$. Because $V$ is a regular box, the total size of all the non-quasirandom inner boxes is at most $3\e|W|^2$, so the total of $|B_i||C_j||D_k|$ over all $i, j, k$ for which some inner box is not $\e$-quasirandom is at most $3 \e |W|^3$

Therefore, the number of corners of $V$ with vertices lying in $A$ is

\begin{equation} \label{eq:noofcorners}
\sum_{1 \le i, j, k \le m} |B_i||C_j||D_k|\delta_{B_i, C_j}\delta_{B_i, D_k}\delta_{C_j, D_k} + o(1)|W|^3 .
\end{equation}

Now we define the random variables $X, Y, Z$ and the function $f$. For any $(x, y, z) \in W^3$, define $B((x, y, z))$ to be the $B_i$ that contains $x$, $C((x, y, z))$ to be the $C_j \ni y$ and $D((x, y, z))$ the $D_k \ni z$. Let $p \in W^3$ be chosen uniformly at random, and let $X=B(p), Y=C(p), Z=D(p)$. $X, Y$ and $Z$ are independent random variables, each taking only finitely many values. Define
\[ f' = \frac{|A \cap X \times Y \times Z||W|}{|X||Y||Z|} \]
and
\[ f = \min(f', 1) .\]

One can easily check $\E(f') = \alpha(V)$. $\E(f' |X, Y) = |A \cap (X \times Y)_P|/|X||Y| = \delta_{X, Y}$ Similarly $\E(f' | X, Z) = \delta_{X, Z}$ and $\E(f' | Y, Z) = \delta_{Y, Z}$. Putting this together, we get

\[ T(f') = \sum_{1 \le i, j, k \le m} \frac{|B_i||C_j||D_k|\delta_{B_i, C_j}\delta_{B_i, D_k}\delta_{C_j, D_k}}{|W|^3} .\]

Comparing this with equation (\ref{eq:noofcorners}), we see that the number of corners is $(T(f')+o(1))|W|^3$. The problem with $f'$, though, is that it may take values greater than 1. We need to bound $f'$. Below, we will be using convolutions of functions on $W$, which are defined as follows: $g * h (y) := \E_{x \in W} g(x)h(y-x)$. 

\begin{align*}
|A \cap B_i \times C_j \times D_k| &\le |P \cap B_i \times C_j \times D_k| \\
&= |\{(x, y, z) \in B_i \times C_j \times D_k | x+y+z = 0\}| \\ &=  |W|^2 B_i * C_j * D_k (0) \\ &= |W|^2 \sum_{r \in W^*} (B_i * C_j * D_k)^\wedge(r) \\ &= |W|^2 \sum_{r \in W^*} \hat{B}_i(r) \hat{C}_j(r) \hat{D}_k(r)
\end{align*}

We split the above sum into a ``main'' term, and error terms, which can be bounded using the uniformity assumption, plus applications of the Cauchy-Schwarz inequality and Parseval's identity.

\[\sum_{r \in W^*} \hat{B}_i(r) \hat{C}_j(r) \hat{D}_k(r) = \delta(B_i)\delta(C_j)\delta(D_k) + \sum_{r \in W^* \setminus \{0\}} \hat{B}_i(r) \hat{C}_j(r) \hat{D}_k(r) \]
\[ \le \delta(B_i)\delta(C_j)\delta(D_k) + \max_{r \in W^* \setminus \{0\}} |\hat{B}_i(r)| \left|\sum_{r \in W^* \setminus \{0\}} \hat{C}_j(r) \hat{D}_k(r) \right|\]
\[ \le \delta(B_i)\delta(C_j)\delta(D_k) + \frac{\e}{m^3} \left(\sum_{r \in W^*} |\hat{C}_j(r)|^2 \right)^{1/2} \left(\sum_{r \in W^*} |\hat{D}_k(r)|^2 \right)^{1/2}\]
\[ \le \delta(B_i)\delta(C_j)\delta(D_k) + \frac{\e}{m^3} \delta(C_j)^{1/2} \delta(D_k)^{1/2} \le  \delta(B_i)\delta(C_j)\delta(D_k) + \frac{\e}{m^3} \]

We deduce that

\[ f'(B_i, C_j, D_k) \le \frac{|W|^3(\delta(B_i)\delta(C_j)\delta(D_k) + \e/m^3)}{|B_i||C_j||D_k|} = 1 + \frac{\e}{m^3 \delta(B_i) \delta(C_j) \delta(D_k)}\]
and hence
\[ \E(f' - f) \le \sum_{1 \le i, j, k, \le m} \frac{\e}{m^3} \le \e .\]

Therefore $\E(f) = \E(f') + O(\e) = \alpha(V) + O(\e)$. Using the fact that $\E(f' | X, Y) = \delta_{X, Y} \le 1, \E(f' | X, Z) = \delta_{X, Z} \le 1$ and $\E(f' | Y, Z) = \delta_{Y, Z} \le 1$, we also have
\begin{align*}
T(f') - T(f) & = \E(\E(f'-f|X, Y)\E(f'|X, Z)\E(f'|Y, Z)) \\
& + \E(\E(f|X, Y)\E(f'-f|X, Z)\E(f'|Y, Z)) \\
& + \E(\E(f|X, Y)\E(f|X, Z)\E(f'-f|Y, Z)) \\
& \le \E(\E(f'-f|X, Y)) + \E(\E(f'-f|X, Z)) + \E(\E(f'-f|Y, Z)) \\
& = 3 \E(f'-f) \le 3\e
\end{align*}
So $T(f) = T(f') + o(1)$. Combining this with previous results, we get that the number of corners is $(T(f)+o(1))|W|^3$, where $\E(f)=\alpha(V) + O(\e)$, as required.
\end{proof}

This completes the proof of Claim~\ref{1box}, and therefore also of Claim~\ref{countclaim} and Theorem~\ref{pt2}.

\section{Bounds}\label{bounds}

In this section, we prove some bounds on $m(\alpha)$, and hence get some bounds on $M(\alpha)$, the number of corners we can expect to find with the same $d$. 

Recall that $m(\alpha)$ was defined to be the infimum of
\[ T(f) := \E(\E(f|X, Y)\E(f|X, Z)\E(f|Y, Z)) \]
over all possible piecewise constant functions $f:[0,1]^3 \to [0,1]$ with $\E(f) = \alpha$.

\subsection{Upper bound}
In section 5 of \cite{chu11}, Chu proves that for all $c > 0 $ there exists an $\alpha$ such that $m(\alpha) < c\alpha^3$. Her construction in fact shows that $m(\alpha) < C\alpha^{3.086}$ for some constant $C>1$. Her method involves taking a simple example of a function $f$ with $T(f) < \E(f)^3$, and then taking tensor products. We use the same method, except that our starting function is a bit simpler and we get a better bound.

Let $X, Y, Z \sim U[0,1]$ be independent. We define the piecewise constant function $g$ as follows: 
\[ g(X, Y, Z) = \begin{cases}
    0       & \quad \text{if } (X,Y,Z) \in [0, 1/2)^3 \\
            & \quad \text{or}  (X,Y,Z) \in [1/2, 1)^3 \\
    1       & \quad \text{otherwise.}
  \end{cases}
  \]
A simple calculation shows us that $\E(g) = 3/4$ and 
\[ T(g) = \frac{13}{32} = \left(\frac{3}{4}\right)^3\left(\frac{26}{27}\right) = \left(\frac{3}{4}\right)^{3+c} ,\]
where \[ c:= \frac{\log(26/27)}{\log(3/4)} = 0.131 \dots\]
So this shows that $m(\alpha) \le \alpha^{3+c}$ for $\alpha = 3/4$. We extend this using the construction detailed below.

We construct a new piecewise constant function $f^{\otimes n}$ from an old piecewise constant function $f$ essentially by taking the $n$-fold tensor power. However, in order to define the tensor power, we need to go to the alternative formulation given by Lemma~\ref{discrete}.
Given random variables $X, Y, Z \sim U[0,1]$, and $f : [0,1]^3 \to [0,1]$, we apply the first part of Lemma~\ref{discrete} to obtain independent random variables $X', Y', Z'$, each taking only finietly many values, and a function $f'=f'(X', Y', Z')$ such that $\E(f')=\E(f)$ and $T(f') = T(f)$. We then define $X'^{\otimes n} := (X'_1, X'_2, \dots , X'_n)$ where $X'_1, X'_2, \dots X'_n$ are independent $\sim X'$, and define $Y'^{\otimes n}, Z'^{\otimes n}$ similarly. Then define 
\[ f'^{\otimes n} := \prod_{i=1}^n f'(X'_i, Y'_i, Z'_i) .\]
$f'^{\otimes n}$ is a function of $X'^{\otimes n}, Y'^{\otimes n}$ and $Z'^{\otimes n}$, and $X'^{\otimes n}, Y'^{\otimes n}$ and $Z'^{\otimes n}$ each take finitely many values.
Therefore, we can apply the second part of Lemma~\ref{discrete} to get a function $f^{\otimes n} : [0,1]^3 \to [0,1]$ with $\E(f^{\otimes n}) = \E(f'^{\otimes n}) = \E(f')^n = \E(f)^n$ and $T(f^{\otimes n}) = T(f'^{\otimes n}) = T(f')^n = T(f)^n$.

\begin{theorem} For all $\alpha \in [0,1]$, we have
\[ m(\alpha) \le \frac{27}{26}\alpha^{3+c} \]
\end{theorem}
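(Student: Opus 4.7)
The plan is to use the tensor-power machinery introduced just before the theorem, together with a scaling trick, to interpolate between the discrete values of $\alpha$ of the form $(3/4)^n$.

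First I would observe that applying the tensor-power construction to the base function $g$, we get, for each non-negative integer $n$, a piecewise constant $g^{\otimes n}:[0,1]^3\to[0,1]$ with
\[ \E(g^{\otimes n}) = \left(\tfrac{3}{4}\right)^n, \qquad T(g^{\otimes n}) = \left(\tfrac{3}{4}\right)^{n(3+c)} = \E(g^{\otimes n})^{3+c}. \]
This already gives $m(\alpha) \le \alpha^{3+c}$ at the countable set of values $\alpha = (3/4)^n$. To handle a general $\alpha$, I would use the fact that $T$ scales cubically under scalar multiplication: for $\lambda\in[0,1]$, the function $\lambda g^{\otimes n}$ is still piecewise constant into $[0,1]$, and satisfies $\E(\lambda g^{\otimes n}) = \lambda (3/4)^n$ and $T(\lambda g^{\otimes n}) = \lambda^3 (3/4)^{n(3+c)}$.

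Next, for a given $\alpha\in(0,1]$, I would pick $n$ so that $\alpha \le (3/4)^n \le (4/3)\alpha$; this is possible because the interval of admissible $n$ has length exactly $\log(4/3)/|\log(3/4)| = 1$, so contains an integer. Setting $\lambda := \alpha (3/4)^{-n}\in[3/4,1]$ gives $\E(\lambda g^{\otimes n}) = \alpha$ and
\[ T(\lambda g^{\otimes n}) = \alpha^3 (3/4)^{-3n} (3/4)^{n(3+c)} = \alpha^3 (3/4)^{nc} \le \alpha^3 \bigl((4/3)\alpha\bigr)^{c} = (4/3)^{c}\,\alpha^{3+c}. \]
Finally I would use the definition $c = \log(26/27)/\log(3/4)$ to compute
\[ (4/3)^c = \exp\!\bigl(c\log(4/3)\bigr) = \exp\!\bigl(-\log(26/27)\bigr) = \tfrac{27}{26}, \]
which yields $T(\lambda g^{\otimes n}) \le \tfrac{27}{26}\alpha^{3+c}$, hence $m(\alpha) \le \tfrac{27}{26}\alpha^{3+c}$. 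The case $\alpha=0$ is handled by the zero function.

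There is no real obstacle here; the only thing to verify carefully is that the interval $[\alpha,(4/3)\alpha]$ always contains a power of $3/4$, which follows from its multiplicative length matching the ratio between consecutive powers, and the algebraic identity $(4/3)^c = 27/26$, which is immediate from the definition of $c$. The proof is essentially a bookkeeping exercise once the scaling observation $T(\lambda f) = \lambda^3 T(f)$ is combined with the tensor-power identities.
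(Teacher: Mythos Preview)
Your proof is correct and is essentially the same as the paper's: both scale a tensor power $g^{\otimes k}$ by a factor $\lambda=\beta\in[3/4,1]$ so that $\E(\beta g^{\otimes k})=\alpha$, then use $T(\beta g^{\otimes k})=\beta^3(3/4)^{k(3+c)}$ together with $(3/4)^c=26/27$ to obtain the bound. The only differences are cosmetic---you parametrise by choosing $n$ with $(3/4)^n\in[\alpha,(4/3)\alpha]$ and then set $\lambda=\alpha(3/4)^{-n}$, while the paper directly writes $\alpha=\beta(3/4)^k$ with $\beta\in(3/4,1]$---and you route the final inequality through $(4/3)^c=27/26$ rather than $\beta^3\le(27/26)\beta^{3+c}$, which is the same identity read the other way.
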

\begin{proof} If $\alpha = 0$ this theorem is trivial, so assume $\alpha > 0$. We may write $\alpha = \beta (3/4)^k$ for some integer $k$ and $\beta \in (3/4, 1]$. Then we take $f$ to be the function $\beta g^{\otimes k}$. $\E(\beta g^{\otimes k}) = \beta \E(g)^k = \beta (3/4)^k = \alpha$, and $T(\beta g^{\otimes k}) = \beta^3 T(g^{\otimes k}) = \beta^3 (3/4)^{k(3+c)} \le (27/26) \beta^{3+c} (3/4)^{k(3+c)} = (27/26) \alpha^{3+c}$ as required.
\end{proof}
\subsection{Lower bound}

In this subsection, we will prove the lower bound $m(\alpha) \ge \alpha^4$. For convenience, we will write $f_3 = \E(f|X, Y), f_2= \E(f|X, Z) $ and $ f_1=\E(f|Y, Z)$.

Firstly, note that $\E(f \mathbf{1}(f_i < \alpha/4)) \le \alpha/4$ for $i = 1, 2, 3$ (where $\mathbf{1}$ denotes the indicator function of an event), because by the law of total expectation
\begin{align*}
\E(f \mathbf{1}(f_1 < \alpha/4)) & = \E(\E(f \mathbf{1}(f_1 < \alpha/4)|Y, Z)) \\ 
& = \E(\E(f| Y, Z) \mathbf{1}(f_1 < \alpha/4))) \\
&= \E(f_1 \mathbf{1}(f_1 < \alpha/4)) \\
& \le \alpha/4
\end{align*}
And similarly for the others. Therefore 
\[ \E(f \mathbf{1}(f_1 \ge \alpha/4)\mathbf{1}(f_2 \ge \alpha/4)\mathbf{1}(f_3 \ge \alpha/4)) \ge \E(f) - \sum_{i=1}^3  \E(f \mathbf{1}(f_i < \alpha/4)) \ge \alpha/4 \]
Therefore the event $A = (f_1 \ge \alpha/4)\wedge(f_2 \ge \alpha/4)\wedge(f_3 \ge \alpha/4)$ has probability at least $\alpha/4$.

So we have, for any $f$

\[ T(f) = \E(f_1f_2f_3) \ge \E(f_1f_2f_3|A)\mathbb{P}(A) \ge (\alpha/4)^3(\alpha/4) = \frac{\alpha^4}{256} .\]

Now suppose for contradiction some function $h$ with $\E(h)=\alpha$ has $T(h)<\alpha^4$. So $T(h)=\alpha^4/C$, some $C>1$. Pick an integer $k$ such that $C^k > 256$. Then $\E(h^{\otimes k}) = \alpha^k$ but $T(h^{\otimes k}) = \alpha^{4k}/C^k < \alpha^{4k}/256$, contradicting the above inequality for $f = h^{\otimes k}$. Therefore $m(\alpha) \ge \alpha^4$.

\subsection{Conclusion}
Now that we have bounds on $m(\alpha)$, we can combine them with Theorem~\ref{pt1} and Theoram~\ref{pt2} to get bounds on $M(\alpha)$.

For the family of all abelian groups, we have $M(\alpha) \le (27/26) \alpha^{3+c}$, where $c = 0.131 \dots$. Meanwhile, since $\alpha^4$ is a convex function of $\alpha$, we can deduce that for the family $G_n = \mathbb{F}_2^n$, $M(\alpha) \ge \alpha^4$. This last bound also holds for the family $G_n = \mathbb{F}_p^n$ for any prime $p$.

It would be natural to ask what is the correct exponent $\log(m(\alpha))/\log(\alpha))$. So far we only know that this exponent lies somewhere between $3.131$ and $4$. The author is continuing to study this question and hopes to report on further advances in due course.

\emph{Acknowledgements.} I am grateful to Ben Green for plenty of helpful comments and discussion.

\bibliographystyle{plain}
\bibliography{Bibliography}

\begin{thebibliography}{10}

\bibitem{ajtaiszemeredi74}
M.~Ajtai and E.~Szemer\'edi.
\newblock Sets of lattice points that form no squares.
\newblock {\em Stud. Sci. Math. Hungar.}, 9:9--11 (1975), 1974.

\bibitem{bergelsonhostkra05}
Vitaly Bergelson, Bernard Host, and Bryna Kra.
\newblock Multiple recurrence and nilsequences.
\newblock {\em Invent. Math.}, 160(2):261--303, 2005.
\newblock With an appendix by Imre Ruzsa.

\bibitem{chu11}
Qing Chu.
\newblock Multiple recurrence for two commuting transformations.
\newblock {\em Ergodic Theory Dynam. Systems}, 31(3):771--792, 2011.

\bibitem{furstenberg81}
H.~Furstenberg.
\newblock {\em Recurrence in ergodic theory and combinatorial number theory}.
\newblock Princeton University Press, Princeton, N.J., 1981.
\newblock M. B. Porter Lectures.

\bibitem{ARLoriginal}
B.~Green.
\newblock A {S}zemer\'edi-type regularity lemma in abelian groups, with
  applications.
\newblock {\em Geom. Funct. Anal.}, 15(2):340--376, 2005.

\bibitem{ARLadvanced}
Ben Green and Terence Tao.
\newblock An arithmetic regularity lemma, an associated counting lemma, and
  applications.
\newblock In {\em An irregular mind}, volume~21 of {\em Bolyai Soc. Math.
  Stud.}, pages 261--334. J\'anos Bolyai Math. Soc., Budapest, 2010.

\bibitem{hoeffding63}
Wassily Hoeffding.
\newblock Probability inequalities for sums of bounded random variables.
\newblock {\em J. Amer. Statist. Assoc.}, 58:13--30, 1963.

\bibitem{ruzsaszemeredi78}
I.~Z. Ruzsa and E.~Szemer\'edi.
\newblock Triple systems with no six points carrying three triangles.
\newblock In {\em Combinatorics ({P}roc. {F}ifth {H}ungarian {C}olloq.,
  {K}eszthely, 1976), {V}ol. {II}}, volume~18 of {\em Colloq. Math. Soc.
  J\'anos Bolyai}, pages 939--945. North-Holland, Amsterdam-New York, 1978.

\bibitem{shkredov06}
I.~D. Shkredov.
\newblock On a problem of {G}owers.
\newblock {\em Izv. Ross. Akad. Nauk Ser. Mat.}, 70(2):179--221, 2006.

\bibitem{szemeredi75}
E.~Szemer\'edi.
\newblock On sets of integers containing no {$k$} elements in arithmetic
  progression.
\newblock {\em Acta Arith.}, 27:199--245, 1975.
\newblock Collection of articles in memory of Juri\u\i Vladimirovi\v c Linnik.

\bibitem{szemeredi76}
Endre Szemer\'edi.
\newblock Regular partitions of graphs.
\newblock In {\em Probl\`emes combinatoires et th\'eorie des graphes ({C}olloq.
  {I}nternat. {CNRS}, {U}niv. {O}rsay, {O}rsay, 1976)}, volume 260 of {\em
  Colloq. Internat. CNRS}, pages 399--401. CNRS, Paris, 1978.

\end{thebibliography}

\end{document}